\documentclass[11pt,draft]{article}
\usepackage{amsmath,amsthm}
\usepackage{amssymb,latexsym}
\usepackage{enumerate}

\newtheorem{thm}{Theorem}[section]
\newtheorem{cor}[thm]{Corollary}
\newtheorem{lem}[thm]{Lemma}
\newtheorem{prop}[thm]{Proposition}

\usepackage{anysize} 
\marginsize{2cm}{2cm}{2cm}{2cm}



\theoremstyle{definition}



\numberwithin{equation}{section}



\newcommand{\NN}{\mathbb{N}}
\newcommand{\ZZ}{\mathbb{Z}}

\newcommand{\RR}{\mathbb{R}}
\newcommand{\CC}{\mathbb{C}}

\newcommand{\HH}{\mathbb{H}}

\newcommand{\FF}{\mathbb{F}}

\hyphenation{ca-nonical}


\begin{document}

\title{Explicit fundamental solutions of some second order differential operators on Heisenberg groups}

\author{Isolda Cardoso\\
ECEN-FCEIA, Universidad Nacional de Rosario\\
Pellegrini 250, 2000 Rosario, Argentina\\
E-mail: isolda@fceia.unr.edu.ar
\and 
Linda Saal\\
FAMAF, Universidad Nacional de C\'{o}rdoba\\
Ciudad Universitaria, 5000 C\'{o}rdoba, Argentina\\
E-mail: saal@mate.uncor.edu}

\date{}

\maketitle


\renewcommand{\thefootnote}{}

\footnote{2000 \emph{Mathematics Subject Classification}: Primary 43A80; Secondary 35A08.}

\footnote{\emph{Key words and phrases}: Heisenberg group, spherical distributions, fundamental solution.}

\footnote{\emph{This research was partly supported by research grants form SCyT-UNR and Secyt-UNC, and a fellowship from CONICET.}}

\renewcommand{\thefootnote}{\arabic{footnote}}
\setcounter{footnote}{0}


\begin{abstract}
Let $p,q,n$ be natural numbers such that $p+q=n$. Let $\FF$ be either $\CC$, the complex numbers field, or $\HH$, the quaternionic division algebra. We consider the Heisenberg group $N(p,q,\FF)$ defined as $N(p,q,\FF)=\FF^{n}\times \mathfrak{Im}\FF$, with group law given by $$(v,\zeta)(v',\zeta')=\left( v+v', \zeta+\zeta'-{\frac{1}{2}} \mathfrak{Im} B(v,v') \right),$$ where $B(v,w)=\sum_{j=1}^{p} v_{j}\overline{w_{j}} - \sum_{j=p+1}^{n} v_{j}\overline{w_{j}}$. Let $U(p,q,\FF)$ be the group of $n\times n$ matrices with coefficients in $\FF$ that leave invariant the form $B$. In this work we compute explicit fundamental solutions of some second order differential operators on $N(p,q,\FF)$ which are canonically associated to the action of $U(p,q,\FF)$.
\end{abstract}

\section{Introduction and Preliminaries}
\par Let $p,q,n$ be natural numbers such that $p+q=n$. Let $\FF$ be either $\CC$, the complex numbers field, or $\HH$, the quaternionic division algebra. We consider the Heisenberg group $N(p,q,\FF)$ defined as $N(p,q,\FF)=\FF^{n}\times \mathfrak{Im}\FF$, with group law given by $$(v,\zeta)(v',\zeta')=\left( v+v', \zeta+\zeta'-{\frac{1}{2}} \mathfrak{Im} B(v,v') \right),$$ where $B(v,w)=\sum_{j=1}^{p} v_{j}\overline{w_{j}} - \sum_{j=p+1}^{n} v_{j}\overline{w_{j}}$. The associated Lie algebra is $\eta(p,q,\FF)=\FF^{n}\oplus\mathfrak{Im}(\FF)$, with Lie bracket given by $$[(v,\zeta),(v',\zeta')]=(0,-\mathfrak{Im}B(v,v')).$$ Let $\mathcal{U}(\eta(p,q,\FF))$ be the universal envelopping algebra of $\eta(p,q,\FF)$ which we identify with the algebra of left invariant differential operators. Let $U(p,q,\HH)$ be the group of $n\times n$ matrices with coefficients in $\FF$ that leave invariant the form $B$. Then $U(p,q,\FF)$ acts by automorphism on $\eta(p,q,\FF)$ by $$g\cdot(v,\zeta)=(gv,\zeta).$$ We denote by $\mathcal{U}(\eta(p,q,\FF))^{U(p,q,\FF)}$ the subalgebra of $\mathcal{U}(\eta(p,q,\FF))$ of the left invariant differential operators which commute with this action. It is known that this subalgebra is generated by two operators: $L$ and $U$, and a family of tempered joint eigendistributions is computed explicitly (see for example \cite{[D-M]}, \cite{[G-S1]}, \cite{[V]}).

\par More precisely, if $\FF=\CC$ and $\{X_{1},\dots,X_{n},Y_{1},\dots,Y_{n},U\}$ denotes the standard basis of the Heisenberg Lie algebra with $[X_{i},Y_{j}]=\delta_{ij}U$ and all the other brackets are zero, then $$L=\sum\limits_{j=1}^{p} X_{j}^{2} + Y_{j}^{2} - \sum\limits_{j=p+1}^{n} X_{j}^{2}+Y_{j}^{2}.$$ For $\lambda\in\RR$, $\lambda\neq 0$ and $k\in\ZZ$, $S_{\lambda,k}$ is a $U(p,q)$-invariant tempered distribution satisfying 
$$\begin{array}{rcl}
		LS_{\lambda,k} & = & -|\lambda|(2k+p-q)S_{\lambda,k}, \\
		iUS_{\lambda,k} & = & \lambda S_{\lambda,k}.\end{array}$$
This family provides us an \textit{inversion formula}: for all $f$ in the Schwartz space on the Heisenberg group, we have that
\begin{equation}
\label{inversion1}
f(z,t)=\sum\limits_{k\in\ZZ} \int\limits_{-\infty}^{\infty} f\ast S_{\lambda,k} |\lambda|^{n} d\lambda, \qquad (z,t)\in N(p,q,\CC).
\end{equation}
If $\FF=\HH$ we take $\{X_{1}^{0},X_{1}^{1},X_{1}^{2},X_{1}^{3},\dots,X_{n}^{0},X_{n}^{1},X_{n}^{2},X_{n}^{3},Z_{1},Z_{2},Z_{3}\}$ the canonical basis for the Lie algebra, where $Z_{1},Z_{2},Z_{3}$ generate the center of $\eta(p,q,\HH)$. Here,
\begin{align*}
L= & \sum\limits_{r=1}^{p} \sum\limits_{l=0}^{3}(X_{r}^{l})^{2} - \sum\limits_{r=p+1}^{n} \sum\limits_{l=0}^{3}(X_{r}^{l})^{2},\qquad \mbox{ and} \\
U= & \sum\limits_{l=1}^{3} Z_{l}^{2}.
\end{align*}
There also exists a family of $U(p,q,\HH)-$invariant tempered distributions $\varphi_{w,k}$, $w\in\RR^{3}$ y $k\in\ZZ$, such that each one of them is a joint eigendistribution of $L$ and $U$: \begin{align*} L\varphi_{w,k} &=-|w|(2k+2(p-q))\varphi_{w,k},\\ U\varphi_{w,k} &= -\lambda^{2}\varphi_{w,k};
\end{align*}
in this case this family also provides an inversion formula: for all $f\in\mathcal{S}(N(p,q,\HH))$ we have that \begin{equation}
\label{inversion2}
f(\alpha,z)=\sum\limits_{k\in\ZZ}\int\limits_{\RR^{3}} (f\ast\varphi_{w,k})(\alpha,z)|w|^{2n}dw, \qquad (\alpha,z)\in N(p,q,\HH).
\end{equation}

\par The aim of this work is to explicitly compute a fundamental solution in the classical case for the operator $\mathcal{L}_{\alpha}=L+i\alpha U$, where $\alpha$ is a complex number; and in the quaternionic case for the operator $L$. Recall that a fundamental solution for the differential operator $\mathcal{L}$ is a tempered distribution $\Phi$ such that for all $f$ in the Schwartz class, we have that $\mathcal{L}(f\ast \Phi)=(\mathcal{L}f)\ast\Phi = f\ast \mathcal{L}(\Phi)=f$. So if we define the operator $K$ as $Kf=f\ast\Phi$, then $K\circ\mathcal{L}f=\mathcal{L}\circ Kf=f$.

\par From the inversion formula (\ref{inversion1}) it is natural to propose as a fundamental solution of $\mathcal{L}_{\alpha}$
\begin{equation}
\label{fundsol1}
<\Phi_{\alpha},f>=\sum\limits_{k\in\ZZ}\int\limits_{-\infty}^{\infty} {\frac{1}{{-|\lambda|(2k+p-q-\alpha\mbox{ sgn }\lambda)}}} <S_{\lambda,k},f> |\lambda|^{n} d\lambda,
\end{equation}
for $f\in\mathcal{S}(N(p,q,\CC))$; and from (\ref{inversion2}) we propose as a fundamental solution of $L$
\begin{equation}
\label{fundsol2}
<\Phi,f>=\sum\limits_{k\in\ZZ}\int\limits_{\RR^{3}} {\frac{1}{{-|w|(2k+2(p-q))}}} <\varphi_{w,k},f> |w|^{2n} dw,
\end{equation}
for $f\in \mathcal{S}(N(p,q,\HH))$.

\par We remark that for $q=0$, $\FF=\CC$ we recover the fundamental solution for the operator $\mathcal{L}_{\alpha}$ given in \cite{[F-S]}, and for $q=0$, $\FF=\HH$ we recover Kaplan's fundamental solution for the operator $L$ given in \cite{[K]}. The case $q\neq 0$, $\alpha=0$ was obtained in \cite{[G-S2]}. 

\par The expression of $\Phi_{\alpha}$ is obtained in theorem 2.9, and for the computation we follow the method used in \cite{[G-S2]}. In the quaternionic case, $\Phi$ is given in theorem 3.1, and for its computation we use the Radon transform in order to reduce this case to the classical one.

\hfill

\par To describe both families of eigendistributions $\{S_{\lambda,k}\}$ and $\{\varphi_{w,k}\}$ we need to adapt a result by Tengstrand in \cite{[T]}. We describe the elements for the case $\FF=\CC$, the other one is similar. First of all we take bipolar coordinates on $\CC^{n}$ for $(x_{1},y_{1},\dots,x_{n},y_{n})$ we set $\tau=\sum\limits_{j=1}^{p}(x_{j}^{2}+y_{j}^{2})-\sum\limits_{j=p+1}^{n}(x_{j}^{2}+y_{j}^{2})$, $\rho=\sum\limits_{j=1}^{n}(x_{j}^{2}+y_{j}^{2})$, $u=(x_{1},y_{1},\dots,x_{p},y_{p})$, $v=(x_{p+1},y_{p+1},\dots,x_{n},y_{n})$. Hence $u=\left({{\rho+\tau}\over 2}\right)^{1\over 2}\omega_{u}$, with $\omega_{u}\in S^{2p-1}$ and $v=\left({{\rho-\tau}\over 2} \right)^{1\over 2}\omega_{v}$, with $\omega_{v}\in S^{2q-1}$. It is easy to see by changing variables that \begin{align*}
& \int\limits_{\CC^{n}}f(z)dz=\int\limits_{-\infty}^{\infty}\int\limits_{\rho>|\tau|}\int\limits_{S^{2p-q}\times S^{2q-1}} f\left( \left({{\rho+\tau}\over 2}\right)^{1\over 2}\omega_{u}, \left({{\rho-\tau}\over 2} \right)^{1\over 2}\omega_{v} \right) d\omega_{u} d\omega_{v} \times \\
& \qquad \qquad \times (\rho+\tau)^{p-1}(\rho-\tau)^{q-1}d\rho d\tau.
\end{align*} Then we define for $f\in\mathcal{S}(\RR^{2n})$
$$Mf(\rho,\tau)=\int\limits_{S^{2p-1}\times S^{2q-1}} f\left( \left({{\rho+\tau}\over 2}\right)^{1\over 2}\omega_{u}, \left({{\rho-\tau}\over 2} \right)^{1\over 2}\omega_{v} \right) d\omega_{u} d\omega_{v},$$
and also $$Nf(\tau)=\int\limits_{|\tau|}^{\infty} Mf(\rho,\tau) (\rho+\tau)^{p-1}(\rho-\tau)^{q-1}d\rho.$$ Let us now define the space $\mathcal{H}_{n}$ of the functions $\varphi:\RR\to\CC$ such that $\varphi(\tau)=\varphi_{1}(\tau)+\tau^{n-1}\varphi_{2}(\tau)H(\tau)$, for $\varphi_{1},\varphi_{2}\in\mathcal{S}(\RR)$, where $H$ denotes the Heaviside function. In \cite{[T]} it is proved that $\mathcal{H}_{n}$ with a suitable topology is a Fr\'{e}chet space, and following the same lines we can see that the linear maps $N:\mathcal{S}(\RR^{2n}-\{0\})\to\mathcal{S}(\RR)$ and $N:\mathcal{S}(\RR^{2n})\to\mathcal{H}$ are continuous and surjective. Let us consider now $\mu\in\mathcal{S}'(\RR^{2n})^{U(p,q)}$, then it is easy to see that there exists a unique $T\in\mathcal{S}'(\RR)$ such that $<\mu,f>=<T,Nf>$, for every $f\in\mathcal{S}(\RR^{2n}-\{0\})$. Moreover, if $N':\mathcal{H}'\to\mathcal{S}'(\RR^{2n})$ is the adjoint map, by following again the arguments shown on \cite{[T]} we can see that $N'$ is a homeomorphism. Finally, for a function $f\in\mathcal{S}(N(p,q,\CC))$ we write $Nf(\tau,t)$ for $N(f(.,t))(\tau)$.

\par The distributions $S_{\lambda,k}$ are defined as follows
\begin{equation}
\label{esféricas1}
 S_{\lambda,k}=\sum\limits_{ \{ m\in\NN_{o}^{n}, B(m)=k \} } E_{\lambda}(h_{m},h_{m}),
\end{equation}
 where $B(m)=\sum\limits_{j=1}^{p} m_{j} - \sum\limits_{j=p+1}^{n} m_{j}$, the set of functions $\{ h_{m} \} \subset L^{2}(\RR^{n})$ is the Hermite basis and  $E_{\lambda}(h,h')(z,t)=<\pi_{\lambda}(z,t)h,h'>$ are the matrix entries of the Schr\"{o}dinger representation $\pi_{\lambda}$. Also, $S_{\lambda,k}=e^{-i\lambda t}\otimes F_{\lambda,k}$, where each $F_{\lambda,k}\in\mathcal{S}'(\CC^{n})^{U(p,q)}$ is a tempered distribution defined in terms of the Laguerre polinomials $L_{k}^{m}$ and the map $N$ as follows: for $g\in\mathcal{S}(\CC^{n})$, $\lambda\neq 0$ and $k\in\ZZ$
\begin{align}
	\mbox{if }k\ge 0, & <F_{\lambda,k},g> = < (L_{k-q+n-1}^{0}H)^{(n-1)}, \tau \to {2\over{|\lambda|}} e^{- {\tau \over 2}} Ng\left( {2\over{|\lambda|}} \tau \right)>, \mbox{ and} \label{Flambdak>0} \\
	 \mbox{if }k<0, & <F_{\lambda,k},g>  = < (L_{-k-p+n-1}^{0}H)^{(n-1)}, \tau \to {2\over{|\lambda|}} e^{- {\tau \over 2}} Ng\left(- {2\over{|\lambda|}} \tau\right)>. \label{Flambdak<0} 
\end{align}

\par For the quaternionic case we consider the Schr\"{o}dinger representation $\pi_{w}$ as given in \cite{[R]} (see also \cite{[K-R]}):
\begin{equation}
\label{Schrödinger2}
\pi_{w}(\alpha,z)=\pi_{|w|}\left(\alpha,<z,{w\over{|w|}}>\right),
\end{equation}
where $\pi_{|w|}$ is the Schr\"{o}dinger representation for the classical Heisenberg group $N(2p,2q,\CC)$. We have analogously that the distributions $\varphi_{w,k}$ are defined by 
\begin{equation}
\label{esfericas2} \varphi_{w,k}=\sum\limits_{m\in\NN_{0}^{2n}:B(m)=k} E_{w}(h_{m},h_{m}),
\end{equation}
where $B(m)=\sum\limits_{j=1}^{2p} m_{j} - \sum\limits_{j=2p+1}^{2n} m_{j}$ and $E_{w}(h,h')(\alpha,z)=<\pi_{w}(\alpha,z)h,h'>$ are the matrix entries of the Schr\"{o}dinger representation $\pi_{w}$. Moreover, we have that $\varphi_{w,k}=e^{i<w,z>}\otimes \theta_{w,k}$, where $\theta_{w,k}$ is a tempered distribution such that $\theta_{w,k}=N'T_{|w|,k}$. If we set $\lambda=|w|$, we have that $T_{|w|,k}=F_{\lambda,k}$ where we replace $n,p,q$ by $2n,2p,2q$, respectively, in (\ref{Flambdak>0}) and (\ref{Flambdak<0}). Observe that if we define
\begin{equation}
\label{varphilambda} 
\varphi_{\lambda,k}(\alpha,z) = \int\limits_{S^{2}} e^{i<z,\lambda\xi>} d\xi \theta_{\lambda,k}(\alpha),
\end{equation}
this distributions are $Spin(3)\otimes U(p,q,\HH)-$ invariant.

\section{A fundamental solution for the operator $\mathcal{L}_{\alpha}$}

\par We have that $\Phi_{\alpha}$ defined as in (\ref{fundsol1}) is a well defined tempered distribution, and a fundamental solution for $\mathcal{L}_{\alpha}$. We include the proof since a misprint in Lemma 1 of \cite{[M-R]} is used in the proof Lemma 2.10 of \cite{[B-D-R]}.

\par We will consider $\alpha\in\CC$ such that $2k+p-q\pm \alpha \neq 0$ for all $k\in\ZZ$.
\begin{thm} $\Phi_{\alpha}$ defined as in (\ref{fundsol1}) is a well defined tempered distribution and it is a fundamental solution for the operator $\mathcal{L}_{\alpha}$.
\end{thm}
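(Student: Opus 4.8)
The plan is to verify the two claims in turn: that the series and integral on the right of (\ref{fundsol1}) converge absolutely and depend continuously on $f\in\mathcal{S}(N(p,q,\CC))$, and then, granting that, that $\mathcal{L}_\alpha\Phi_\alpha=\delta_e$, which is the substance of being a fundamental solution. The first part is the real work; the second is an almost formal computation once the first is in place.

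For the convergence I would first reduce the basic pairing to a one-dimensional object: writing $S_{\lambda,k}=e^{-i\lambda t}\otimes F_{\lambda,k}$ and $\tilde f(z,\lambda)=\int_{\RR}f(z,t)e^{-i\lambda t}\,dt\in\mathcal{S}(\CC^{n}\times\RR)$, one has $\langle S_{\lambda,k},f\rangle=\langle F_{\lambda,k},\tilde f(\cdot,\lambda)\rangle$, so everything comes down to estimating this in $k$ and $\lambda$. Following the approach of \cite{[G-S2]}, I would insert the explicit description (\ref{Flambdak>0})--(\ref{Flambdak<0}) of $F_{\lambda,k}$, use the continuity of the Tengstrand map $N\colon\mathcal{S}(\RR^{2n})\to\mathcal{H}_{n}$, integrate the $(n-1)$ derivatives by parts, and apply the Laguerre identity $\tfrac{d^{n-1}}{d\tau^{n-1}}L_{m}^{0}=(-1)^{n-1}L_{m-n+1}^{n-1}$ together with classical pointwise bounds for Laguerre polynomials to control the boundary terms at $\tau=0$ and the remaining integral $\int_{0}^{\infty}L_{k-q}^{n-1}\!\big(\tfrac{|\lambda|\sigma}{2}\big)e^{-|\lambda|\sigma/4}\,N\tilde f(\sigma,\lambda)\,d\sigma$ (present only for $k\ge q$). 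Equivalently one may argue from $\langle S_{\lambda,k},f\rangle=\sum_{B(m)=k}\langle\widehat{\pi_\lambda}(f)h_{m},h_{m}\rangle$ together with the fact that $f\in\mathcal{S}$ makes $\widehat{\pi_\lambda}(f)$ rapidly smoothing in the Hermite basis (uniformly on bounded $\lambda$-sets), using that the least $|m|_{1}$ with $B(m)=k$ equals $|k|$. Either route yields, for every $M\in\NN$, a continuous seminorm $\|\cdot\|_{(M)}$ on $\mathcal{S}(N(p,q,\CC))$ and a constant $C_{M}$ with
$$|\langle S_{\lambda,k},f\rangle|\ \le\ C_{M}\,(1+|\lambda|+|k|)^{-M}\,\|f\|_{(M)}\qquad(k\in\ZZ,\ \lambda\neq0).$$

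Next, under the standing hypothesis $2k+p-q\pm\alpha\neq0$ for all $k\in\ZZ$, the numbers $2k+p-q-\alpha\,\mathrm{sgn}\,\lambda$ are bounded below in modulus by a constant $\delta>0$ (the distance from $\pm\alpha$ to the discrete set $2\ZZ+(p-q)$ when $\alpha\in\RR$, and at least $|\mathrm{Im}\,\alpha|$ otherwise) and grow like $2|k|$. Together with the estimate above this gives
$$\sum_{k\in\ZZ}\int_{-\infty}^{\infty}\frac{|\langle S_{\lambda,k},f\rangle|}{|\lambda|\,|2k+p-q-\alpha\,\mathrm{sgn}\,\lambda|}\,|\lambda|^{n}\,d\lambda\ \le\ C_{M}\,\|f\|_{(M)}\sum_{k\in\ZZ}\frac{1}{1+|k|}\int_{-\infty}^{\infty}\frac{|\lambda|^{n-1}}{(1+|\lambda|+|k|)^{M}}\,d\lambda,$$
which is finite once $M$ is large, the factor $|\lambda|^{-1}$ near $\lambda=0$ being harmless because $|\lambda|^{n-1}$ is locally integrable ($n=p+q\ge1$). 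Hence $\langle\Phi_\alpha,f\rangle$ is well defined and $f\mapsto\langle\Phi_\alpha,f\rangle$ is continuous, so $\Phi_\alpha\in\mathcal{S}'(N(p,q,\CC))$. The main obstacle is exactly this uniform estimate: one has to carry the $\lambda$-dependence explicitly through the rescaling $\tau\mapsto\tfrac{2\tau}{|\lambda|}$ and the weight $e^{-\tau/2}$ in (\ref{Flambdak>0})--(\ref{Flambdak<0}), including the behaviour as $\lambda\to0$, while handling Laguerre polynomials whose degree grows with $|k|$; this is precisely the point where the misprint in Lemma 1 of \cite{[M-R]}, inherited by the proof of Lemma 2.10 of \cite{[B-D-R]}, enters, and the reason for giving the argument in full here.

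Finally, since $N(p,q,\CC)$ is unimodular, $L$ is a sum of squares of left invariant vector fields and $U$ is a single (central) left invariant field, the formal transpose of $\mathcal{L}_\alpha=L+i\alpha U$ is $\mathcal{L}_\alpha^{t}=L-i\alpha U=\mathcal{L}_{-\alpha}$, so $\langle\mathcal{L}_\alpha\Phi_\alpha,f\rangle=\langle\Phi_\alpha,\mathcal{L}_{-\alpha}f\rangle$; since (\ref{fundsol1}) converges absolutely for every Schwartz argument, we may evaluate it at $\mathcal{L}_{-\alpha}f$ term by term. From $LS_{\lambda,k}=-|\lambda|(2k+p-q)S_{\lambda,k}$ and $iUS_{\lambda,k}=\lambda S_{\lambda,k}$ we get $\mathcal{L}_\alpha S_{\lambda,k}=\big(-|\lambda|(2k+p-q)+\alpha\lambda\big)S_{\lambda,k}=-|\lambda|\,(2k+p-q-\alpha\,\mathrm{sgn}\,\lambda)\,S_{\lambda,k}$, whence $\langle S_{\lambda,k},\mathcal{L}_{-\alpha}f\rangle=\langle\mathcal{L}_\alpha S_{\lambda,k},f\rangle=-|\lambda|\,(2k+p-q-\alpha\,\mathrm{sgn}\,\lambda)\,\langle S_{\lambda,k},f\rangle$. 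Substituting this into (\ref{fundsol1}), the eigenvalue cancels the denominator and
$$\langle\mathcal{L}_\alpha\Phi_\alpha,f\rangle=\sum_{k\in\ZZ}\int_{-\infty}^{\infty}\langle S_{\lambda,k},f\rangle\,|\lambda|^{n}\,d\lambda=f(e)=\langle\delta_e,f\rangle,$$
the second equality being the inversion formula (\ref{inversion1}) read at the group identity $(z,t)=(0,0)$. Thus $\mathcal{L}_\alpha\Phi_\alpha=\delta_e$; since $\mathcal{L}_\alpha$ is left invariant this gives $\mathcal{L}_\alpha(f\ast\Phi_\alpha)=f\ast(\mathcal{L}_\alpha\Phi_\alpha)=f$ for every $f\in\mathcal{S}(N(p,q,\CC))$, and the remaining identities in the definition of a fundamental solution follow in the same way.
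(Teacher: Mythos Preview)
Your argument for the second claim---computing $\mathcal{L}_\alpha\Phi_\alpha=\delta_e$ via the transpose $\mathcal{L}_\alpha^{t}=\mathcal{L}_{-\alpha}$ and the eigenvalue relation $\mathcal{L}_\alpha S_{\lambda,k}=-|\lambda|(2k+p-q-\alpha\,\mathrm{sgn}\,\lambda)S_{\lambda,k}$---is correct and in fact tidier than what the paper does (the paper splits $L=L_0+L_1$, uses $L_1\Phi_\alpha=0$, and works with $(\mathcal{L}_\alpha f)\ast\Phi_\alpha$ through the check map).

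The convergence argument, however, has a real gap. The estimate you assert,
\[
|\langle S_{\lambda,k},f\rangle|\le C_M\,(1+|\lambda|+|k|)^{-M}\|f\|_{(M)},
\]
is not what either of your two sketched routes actually produces, and it appears to be false as stated near $\lambda=0$. In the Hermite picture, the relation $\pi_\lambda(\Delta_+)h_m=-|\lambda|(2|m|_1+n)h_m$ (with $\Delta_+$ the positive-definite sub-Laplacian) only yields
\[
|\langle\pi_\lambda(f)h_m,h_m\rangle|\le\frac{\|\Delta_+^{N}f\|_{L^1}}{|\lambda|^{N}(2|m|_1+n)^{N}},
\]
so the decay in $|m|_1$ (hence in $|k|$) is tied to a factor $|\lambda|^{-N}$ and is \emph{not} uniform on bounded $\lambda$-sets containing $0$. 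Equivalently, in the Laguerre picture the rescaling $\tau\mapsto 2\tau/|\lambda|$ brings in the value $L_{k}^{n-1}(0)=\binom{k+n-1}{n-1}$, which grows in $k$. The natural joint decay is in the product $|\lambda|(2|k|+n)$, not in $|\lambda|+|k|$; with only that, your displayed sum--integral bound does not converge as written.

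The paper exploits exactly this product structure. From
\[
\pi_\lambda(f)h_\beta=\frac{1}{\big(-|\lambda|(2B(\beta)+p-q-\alpha\,\mathrm{sgn}\,\lambda)\big)^{m}}\,\pi_\lambda(\mathcal{L}_\alpha^{m}f)h_\beta
\]
together with the trivial bound $|\langle\pi_\lambda(g)h_\beta,h_\beta\rangle|\le\|g\|_{L^1}$ one gets a factor $\big(|\lambda|\,|2k+p-q\pm\alpha|\big)^{-m}$ times $\|\mathcal{L}_\alpha^{m}f\|_{L^1}$; note that this factor already contains the denominator of (\ref{fundsol1}). One then splits the $\lambda$-integral at $|\lambda|\,|2k+p-q\pm\alpha|=1$ and uses a value $m>n$ on the outer region and $m<n$ (e.g.\ $m=0$) on the inner region, after which both pieces sum over $k$ against $\binom{k+n-1}{k}$. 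This is short and avoids any uniform-in-$\lambda$ smoothing claim. If you want to keep your overall strategy, the fix is to replace $(1+|\lambda|+|k|)^{-M}$ by a bound in $|\lambda|(1+|k|)$ and then argue exactly as the paper does.
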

\begin{proof} From (\ref{fundsol1}) and (\ref{esféricas1}) we can write 
\begin{align*}
	& |<\Phi_{\alpha},f>| \le \sum\limits_{k\in\ZZ} \int\limits_{0}^{\infty} \left( \left| {{<S_{-\lambda,k},f>}\over{(2k+p-q+\alpha)}} \right|+\left| {{<S_{\lambda,k},f>}\over{(2k+p-q-\alpha)}} \right| \right) |\lambda|^{n-1} d\lambda\le \\
	  &  \qquad \le  \sum\limits_{k\in\ZZ} \int\limits_{0}^{\infty}  \sum\limits_{\stackrel{\beta\in\NN_{0}^{n}}{B(\beta)=k}} \left( \left| {{<E_{-\lambda}(h_{\beta},h_{\beta}),f>}\over{(2k+p-q+\alpha)}} \right|  +  \left|{{<E_{\lambda}(h_{\beta},h_{\beta}),f>}\over{(2k+p-q-\alpha)}}\right| \right) |\lambda|^{n-1} d\lambda.
 \end{align*}
From the known facts that $\sum\limits_{k\in\ZZ} \sum\limits_{\stackrel{\beta\in\NN_{0}^{n}}{B(\beta)=k}} p(\beta) = \sum\limits_{k\ge 0} \binom{k+n-1}{n-1} p(k)$, \newline $|<E_{\lambda}(h_{\beta},h_{\beta}),f>|={|<\pi_{\lambda}(f)h_{\beta},h_{\beta}>|}   \le ||f||_{L^{1}(N(p,q,\CC))}$ and  that for $m\in\NN$ $$\pi_{\lambda}(f)h_{\beta}={1\over{(-1)^{m}|\lambda|^{m}(2B(\beta)+p-q+\alpha\mbox{ sgn}(\lambda))^{m}}}\pi_{\lambda}(L^{m}f)h_{\beta}$$ we get that
\begin{align*}
&|<\Phi_{\alpha},f>| \le ||L^{m}f||_{L^{1}(N(p,q,\CC))} \times \\
& \quad \times \sum\limits_{k\ge 0} \int\limits_{0}^{\infty} \binom{k+n-1}{k} \left(  {{|\lambda|^{n-1-m}}\over{|2k+p-q+\alpha|^{m+1}}} + {{|\lambda|^{n-1-m}}\over{|2k+p-q-\alpha|^{m+1}}} \right)    d\lambda.
\end{align*}
Let us consider the first term, the second one is analogous. We split the integral between $|\lambda||2k+p-q+\alpha|\ge 1$ and $0\le |\lambda||2k+p-q+\alpha| \le 1$.Thus,
$$ \sum\limits_{k\ge 0} \binom{k+n-1}{k} \int\limits_{|\lambda||2k+p-q+\alpha|\ge 1}   {1\over{|2k+p-q+\alpha|^{m+1}}}  |\lambda|^{n-1-m} d\lambda $$ is finite if we take $m>n$, and 
$$ \sum\limits_{k\ge 0} \binom{k+n-1}{k} \int\limits_{0\le |\lambda||2k+p-q+\alpha|\le 1}   {1\over{|2k+p-q+\alpha|^{m+1}}}  |\lambda|^{n-1-m} d\lambda $$
is finite for $m<n$.
From the above computations it also follows that $\Phi_{\alpha}$ is a tempered distribution. Next we see that it is a fundamental solution by writing $L=L_{0}+L_{1}$, which in coordinates are 
\begin{align*}
& L_{0}  = {1\over 4} \left( \sum\limits_{j=1}^{p} (x_{j}^{2}+y_{j}^{2}) - \sum\limits_{j=p+1}^{n} (x_{j}^{2}+y_{j}^{2}) \right) {{\partial^{2}}\over{\partial t^{2}}} + \\
&\qquad + \sum\limits_{j=1}^{p} \left( {{\partial^{2}}\over{\partial x_{j}^{2}}}+{{\partial^{2}}\over{\partial y_{j}^{2}}} \right) - \sum\limits_{j=p+1}^{n} \left( {{\partial^{2}}\over{\partial x_{j}^{2}}}+{{\partial^{2}}\over{\partial y_{j}^{2}}} \right), \\
&L_{1}  =  {\partial\over{\partial t}} \sum\limits_{j=1}^{n} \left( x_{j} {\partial\over{\partial y_{j}}} - y_{j} {\partial\over{\partial x_{j}}} \right).
\end{align*}
Then, as $L_{0},L_{1}$ and $T$ commute with left translations and also $L_{0}(g^{\vee})=(L_{0}g)^{\vee}$, $L_{1}(g^{\vee})=-(L_{1}g)^{\vee}$ and $T(g^{\vee})=-(Tg)^{\vee}$ we get that $$(\mathcal{L}f\ast\Phi_{\alpha})(z,t)  =  <\Phi_{\alpha},(L_{(z,t)^{-1}} \mathcal{L}f)^{\vee}>  =  
<\Phi_{\alpha},(L_{0}-i\alpha)(L_{(z,t)^{-1}}f)^{\vee},$$ because $L_{1}\Phi_{\alpha}=0$. Hence, 
\begin{align*}
	& (\mathcal{L}_{\alpha}f\ast\Phi)(z,t)  =   \sum\limits_{k\in\ZZ} \int\limits_{-\infty}^{\infty} {{ <S_{\lambda,k}, (L_{0}-i\alpha T)(L_{(z,t)^{-1}} f)^{\vee}>}\over{-|\lambda|(2k+p-q-\alpha \mbox{ sgn }\lambda)}}  |\lambda|^{n-1} d\lambda  = \\
	& \qquad  =  \sum\limits_{k\in\ZZ} \int\limits_{-\infty}^{\infty} {{<(L_{0}+i\alpha T)S_{\lambda,k}, (L_{(z,t)^{-1}} f)^{\vee}>}\over{-|\lambda|(2k+p-q-\alpha \mbox{ sgn }\lambda)}}  |\lambda|^{n-1} d\lambda  = \\
	& \qquad  =  \sum\limits_{k\in\ZZ} \int\limits_{-\infty}^{\infty} <S_{\lambda,k}, (L_{(z,t)^{-1}} f)^{\vee}> |\lambda|^{n-1} d\lambda  =  f(z,t),
\end{align*}
because of the inversion formula. The other one, $f\ast\mathcal{L}_{\alpha}(f)=f$, is immediate.
\end{proof}
\par Now we proceed with the computation of $\Phi_{\alpha}$. Given that the series (\ref{fundsol1}) defining $\Phi_{\alpha}$ converges absolutely, we can split the sum over $k\in\ZZ$ into the sums for $k\ge q$, for $k\le -p$ and for $-p<k<q$. In the first case we change the summation index writing $k=k'+q$ and in the second as well, writing $k=k'-p$. So we get 
\begin{align*}
	& <\Phi_{\alpha},f> =  (-1)\sum\limits_{k'\ge 0} {1\over{2k'+n-\alpha}} \int\limits_{0}^{\infty}  [<S_{\lambda,k'+q},f>-<S_{\lambda,-k'-p},f>] |\lambda|^{n-1} d\lambda  + \\
	 & \qquad  +  (-1)\sum\limits_{k'\ge 0}  {1\over{2k'+n+\alpha}} \int\limits_{0}^{\infty}  [<S_{-\lambda,k'+q},f>-<S_{-\lambda,-k'-p},f>] |\lambda|^{n-1} d\lambda  + \\
	  & \qquad   +  (-1)\sum\limits_{-p<k<q} \int\limits_{0}^{\infty} \left( {{<S_{-\lambda,k},f>}\over{2k+p-q+\alpha}} + {{<S_{\lambda,k},f>}\over{2k+p-q-\alpha}} \right)   |\lambda|^{n-1} d\lambda. \\
	 	 \end{align*}
\par By Abel's Lemma and the Lebesgue dominated convergence theorem  we can write $\Phi_{\alpha}=\Phi_{1}+\Phi_{2}$ where
\begin{align}
& <\Phi_{1},f> = \lim\limits_{r\to 1^{-}} \lim\limits_{\epsilon\to 0^{+}} (-1)\sum\limits_{k'\ge 0}  {{r^{2k'+n-\alpha}}\over{2k'+n-\alpha}} \int\limits_{0}^{\infty} e^{-\epsilon |\lambda|} \times \\ \nonumber
 & \qquad \qquad  \times \left[<S_{\lambda,k'+q},f>-<S_{\lambda,-k'-p},f>\right] |\lambda|^{n-1} d\lambda  + \\ \nonumber
	& \qquad +  \lim\limits_{r\to 1^{-}} \lim\limits_{\epsilon\to 0^{+}} (-1)\sum\limits_{k'\ge 0}  {{r^{2k'+n+\alpha}}\over{2k'+n+\alpha}} \int\limits_{0}^{\infty} e^{-\epsilon |\lambda|} \times \\ \nonumber
	& \qquad \qquad \times  \left[<S_{-\lambda,k'+q},f>-<S_{-\lambda,-k'-p},f>\right] |\lambda|^{n-1} d\lambda, \label{Phi1}
	\end{align}
\begin{align}
& <\Phi_{2},f>  =  \lim\limits_{\epsilon\to 0^{+}} (-1)\sum\limits_{-p<k<q} \int\limits_{0}^{\infty} e^{-\epsilon |\lambda|} \times \\ \nonumber
& \qquad \qquad \times \left( {{<S_{-\lambda,k},f>}\over{2k+p-q+\alpha}} + {{<S_{\lambda,k},f>}\over{2k+p-q-\alpha}} \right)   |\lambda|^{n-1} d\lambda. \label{Phi2}
\end{align}

\par Using that $S_{\lambda,k}= e^{-i\lambda t}\otimes F_{\lambda,k}$ and the computations from \cite{[G-S2]}, namely (2.6) to (2.9), we get that
\begin{align*}
	&<\Phi_{1},f>  =  \lim\limits_{r\to 1^{-}} \lim\limits_{\epsilon\to 0^{+}} (-1)\sum\limits_{k\ge 0}  {{r^{2k+n-\alpha}}\over{2k+n-\alpha}} \int\limits_{0}^{\infty} e^{-\epsilon |\lambda|} \int\limits_{-\infty}^{\infty} e^{-i\lambda t} \times \\
	 & \qquad  \times  < (L_{k+n-1}^{0} H)^{(n-1)},{2\over{|\lambda|}} e^{- {\tau \over 2}} [Nf( {2\over{|\lambda|}} \tau,t)- Nf( -{2\over{|\lambda|}} \tau,t)]> dt d\lambda + \\
	&  \quad  + \lim\limits_{r\to 1^{-}} \lim\limits_{\epsilon\to 0^{+}} (-1)\sum\limits_{k\ge 0}  {{r^{2k+n+\alpha}}\over{2k+n+\alpha}} \int\limits_{0}^{\infty} e^{-\epsilon |\lambda|} \int\limits_{-\infty}^{\infty} e^{i\lambda t} \times \\
	 & \qquad  \times < (L_{k+n-1}^{0} H)^{(n-1)},{2\over{|\lambda|}} e^{- {\tau \over 2}} [Nf( {2\over{|\lambda|}} \tau,t)- Nf( -{2\over{|\lambda|}} \tau,t)]> dt d\lambda, \\
\end{align*}
and setting 
\begin{equation} \label{bkl} b_{k,l}=\sum\limits_{j=l}^{n-2} \binom{j}{l} \left( {1\over 2} \right)^{2-l} (-1)^{n-j} \binom{k+n-1}{n-j-2},\end{equation} we have that
\begin{align*}
& <\Phi_{1},f>  =  \lim\limits_{r\to 1^{-}} \lim\limits_{\epsilon\to 0^{+}} \sum\limits_{k\ge 0} {r^{2k+n-\alpha}\over{2k+n-\alpha}} \int\limits_{0}^{\infty} e^{-\epsilon |\lambda|} \int\limits_{-\infty}^{\infty} e^{-i\lambda t} \times \\ &  \quad \times \left[ (-1)^{n} \int\limits_{-\infty}^{\infty} L_{k}^{n-1}\left( {{|\lambda|}\over 2} |s| \right) e^{- {{|\lambda|}\over 4} |s|} \mbox{ sgn}(s) Nf(s,t)ds + \right. \\
	 & \left.\qquad
	 - 2 \sum\limits_{\stackrel{l=0}{l \mbox{ \tiny{odd}}}}^{n-2} \left( { 2\over{|\lambda|}} \right)^{l+1} b_{k,l} {{\partial^{l}Nf}\over{\partial\tau^{l}}}(0,t)\right] dt d\lambda + \\ 
	&  + \lim\limits_{r\to 1^{-}} \lim\limits_{\epsilon\to 0^{+}} \sum\limits_{k\ge 0}  {{r^{2k+n+\alpha}}\over{2k+n+\alpha}} \int\limits_{0}^{\infty} e^{-\epsilon |\lambda|} \int\limits_{-\infty}^{\infty} e^{i\lambda t} \times \\ &  \quad \times \left[ (-1)^{n} \int\limits_{-\infty}^{\infty} L_{k}^{n-1}\left( {{|\lambda|}\over 2} |s| \right) e^{- {{|\lambda|}\over 4} |s|} \mbox{ sgn}(s) Nf(s,t)ds + \right. \\
	& \left.\qquad
 - 2 \sum\limits_{\stackrel{l=0}{l \mbox{ \tiny{odd}}}}^{n-2} \left( { 2\over{|\lambda|}} \right)^{l+1} b_{k,l} {{\partial^{l}Nf}\over{\partial\tau^{l}}}(0,t)\right] dt d\lambda.
\end{align*}
Now we define 
\begin{equation}\label{Gf} G_{f}(\tau,t)=Nf(\tau,t)-\sum\limits_{j=0}^{n-2} {{\partial^{j}Nf}\over{\partial \tau^{j}}}(0,t) {{\tau^{j}}\over{j!}},\end{equation}
and then we can split $\Phi_{1}=\Phi_{11}+\Phi_{12}$ where

\begin{align}
&<\Phi_{11},f>=\lim\limits_{r\to 1^{-}}\lim\limits_{\epsilon\to 0^{+}}\sum\limits_{k\ge 0}(-1)^{n}{r^{2k+n-\alpha}\over{2k+n-\alpha}}\int\limits_{0}^{\infty}\int\limits_{-\infty}^{\infty}e^{-\epsilon |\lambda|}e^{-i\lambda t}|\lambda|^{n-1} \times \\ \nonumber
& \qquad \qquad \times \int\limits_{-\infty}^{\infty}L_{k}^{n-1}\left({{|\lambda|}\over 2}|\tau|\right)e^{-{{|\lambda|}\over 4}|\tau|}\mbox{sgn}(\tau)G_{f}(\tau,t)d\tau dtd\lambda+\\ \nonumber
&+\lim\limits_{r\to 1^{-}}\lim\limits_{\epsilon\to 0^{+}}\sum\limits_{k\ge 0}(-1)^{n}{r^{2k+n+\alpha}\over{2k+n+\alpha}}\int\limits_{0}^{\infty}\int\limits_{-\infty}^{\infty}e^{-\epsilon |\lambda|}e^{i\lambda t}|\lambda|^{n-1} \times \\ \nonumber
& \qquad \qquad \times \int\limits_{-\infty}^{\infty}L_{k}^{n-1}\left({{|\lambda|}\over 2}|\tau|\right)e^{- {{|\lambda|}\over 4} |\tau|}\mbox{sgn}(\tau)G_{f}(\tau,t)d\tau dtd\lambda,\label{Phi11} 
\end{align}

\begin{align} 
&	<\Phi_{12},f>  =  \lim\limits_{r\to 1^{-}} \lim\limits_{\epsilon\to 0^{+}} \sum\limits_{k\ge 0} {r^{2k+n-\alpha}\over{2k+n-\alpha}}  \int\limits_{0}^{\infty} \int\limits_{-\infty}^{\infty}  e^{-\epsilon |\lambda|}  e^{-i\lambda t} |\lambda|^{n-1} \times \\ \nonumber
& \qquad \qquad \times 2 \sum\limits_{\stackrel{l=0}{l \mbox{\tiny{ odd}}}}^{n-2} \left( {2\over{|\lambda|}} \right)^{l+1} (a_{k,l}+b_{k,l}) {{\partial^{l}Nf}\over{\partial \tau^{l}}}(0,t) dt d\lambda \; + \\ \nonumber
&	\qquad + \lim\limits_{r\to 1^{-}} \lim\limits_{\epsilon\to 0^{+}} \sum\limits_{k\ge 0} {r^{2k+n+\alpha}\over{2k+n+\alpha}}  \int\limits_{0}^{\infty} \int\limits_{-\infty}^{\infty}  e^{-\epsilon |\lambda|}  e^{i\lambda t} |\lambda|^{n-1} \times \\ \nonumber
& \qquad \qquad \times 2 \sum\limits_{\stackrel{l=0}{l \mbox{\tiny{ odd}}}}^{n-2} \left( {2\over{|\lambda|}} \right)^{l+1} (a_{k,l}+b_{k,l}) {{\partial^{l}Nf}\over{\partial \tau^{l}}}(0,t) dt d\lambda, \\ \label{Phi12}
	\end{align}
	
with \begin{equation} \label{akl} a_{k,l}=(-1)^{n}{1\over{l!}} \int\limits_{0}^{\infty} L_{k}^{n-1}(s)e^{-{s\over 2}} s^{l}ds.\end{equation}
\par We will show that $\Phi_{11}$ is well defined. We have proved that the series (\ref{fundsol1}) defining $\Phi_{\alpha}$ converges and as $\Phi_{2}$ is a finite sum we will obtain that $\Phi_{12}$ is also well defined.

\begin{prop} The following identities hold:
\par $(i)$ \begin{align*}
& \int\limits_{-\infty}^{\infty} e^{-\epsilon |\lambda|} e^{-i\lambda t} L_{k}^{n-1}\left( {|\lambda|\over 2}|\tau| \right) e^{-{|\lambda|\over 4}|\tau|} |\lambda|^{n-1} d\lambda = \\
& \qquad = 4^{n}(n-1)!(-1)^{n} \binom{k+n-1}{k} \left( { {(|\tau|-4\epsilon-4it)^{k}}\over{(|\tau|+4\epsilon+4it)^{k+n}}} \right).\end{align*}
\par $(ii)$ \begin{align*}
&\lim\limits_{\epsilon\to 0^{+}} \int\limits_{\RR^{2}} \left( {{(|\tau|-4it-4\epsilon)^{k}}\over{(|\tau|+4it+4\epsilon)^{k+n}}} \right) \mbox{ sgn}(\tau) G_{f}(\tau,t) d\tau dt = \\
& \quad = \int\limits_{\RR^{2}} {1\over{(|\tau|-4it)^{{n\over 2}-{\alpha\over 2}}}} {1\over{(|\tau|+4it)^{{n\over 2}+{\alpha\over 2}}}} \left( {{|\tau|-4it}\over{\tau^{2}+16t^{2}}} \right)^{2k+n-\alpha} \mbox{ sgn}(\tau) G_{f}(\tau,t) d\tau dt.
\end{align*}
\par $(iii)$ \begin{align*}
&\lim\limits_{\epsilon\to 0^{+}} \int\limits_{\RR^{2}} \left( {{(|\tau|+4it-4\epsilon)^{k}}\over{(|\tau|-4it+4\epsilon)^{k+n}}} \right) \mbox{ sgn}(\tau) G_{f}(\tau,t) d\tau dt = \\
& \quad = \int\limits_{\RR^{2}} {1\over{(|\tau|-4it)^{{n\over 2}-{\alpha\over 2}}}} {1\over{(|\tau|+4it)^{{n\over 2}+{\alpha\over 2}}}} \left( {{|\tau|-4it}\over{\tau^{2}+16t^{2}}} \right)^{2k+n+\alpha} \mbox{ sgn}(\tau) G_{f}(\tau,t) d\tau dt.\end{align*}
\end{prop}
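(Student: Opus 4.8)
\medskip
\noindent\emph{Proof sketch.} The plan is to read $(i)$ as a Laplace transform identity and $(ii)$, $(iii)$ as the two passages to the limit $\epsilon\to 0^{+}$ corresponding to the positive and negative halves of the $\lambda$-axis. For $(i)$ I would split $\int_{-\infty}^{\infty}d\lambda=\int_{0}^{\infty}d\lambda+\int_{-\infty}^{0}d\lambda$ and substitute $\lambda\mapsto-\lambda$ in the second integral, so that every factor except $e^{-i\lambda t}$ becomes a function of $|\lambda|$ while the two pieces carry $e^{-i\lambda t}$ and $e^{+i\lambda t}$. On each half-line I would then substitute $u=\tfrac{|\tau|}{2}|\lambda|$ and apply the classical Laplace transform of a Laguerre polynomial,
\[
\int_{0}^{\infty}e^{-pu}\,u^{n-1}\,L_{k}^{n-1}(u)\,du=(n-1)!\,\binom{k+n-1}{k}\,\frac{(p-1)^{k}}{p^{k+n}}\qquad(\operatorname{Re} p>0),
\]
with $p=\frac{|\tau|+4\epsilon\pm 4it}{2|\tau|}$, so that $p-1=\frac{-|\tau|+4\epsilon\pm 4it}{2|\tau|}$. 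Substituting this $p$ and collecting the powers of $2|\tau|$ is then a purely mechanical simplification that yields the stated closed form; no conceptual difficulty arises here.

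For $(ii)$ there are two steps. First, for $(\tau,t)\neq(0,0)$ the integrand on the left converges, as $\epsilon\to 0^{+}$, to $\frac{(|\tau|-4it)^{k}}{(|\tau|+4it)^{k+n}}$, and I would bring this limit into the form displayed on the right by a rearrangement based on $\tau^{2}+16t^{2}=(|\tau|-4it)(|\tau|+4it)$: since $|\tau|\pm 4it$ always lie in the closed right half-plane, the principal branches of their (generally non-integer, as $\alpha\in\CC$) powers combine additively, and the exponents may be regrouped to match the factor $\frac{1}{(|\tau|-4it)^{n/2-\alpha/2}(|\tau|+4it)^{n/2+\alpha/2}}$ together with the power $2k+n-\alpha$ appearing in the summand $\frac{r^{2k+n-\alpha}}{2k+n-\alpha}$ of \eqref{Phi11}. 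Second --- and this is the real point --- I would justify interchanging $\lim_{\epsilon\to 0^{+}}$ with $\int_{\RR^{2}}$ by dominated convergence. Since $|\tau|\ge 0$ and $\epsilon>0$,
\[
\bigl|\,|\tau|-4it-4\epsilon\,\bigr|^{2}=(|\tau|-4\epsilon)^{2}+16t^{2}\le(|\tau|+4\epsilon)^{2}+16t^{2}=\bigl|\,|\tau|+4it+4\epsilon\,\bigr|^{2},
\]
so the $k$-th powers cancel favourably and
\[
\left|\frac{(|\tau|-4it-4\epsilon)^{k}}{(|\tau|+4it+4\epsilon)^{k+n}}\right|\le\bigl|\,|\tau|+4it+4\epsilon\,\bigr|^{-n}\le(\tau^{2}+16t^{2})^{-n/2}
\]
uniformly in $\epsilon$. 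Now $G_{f}(\cdot,t)$, being $Nf(\cdot,t)$ with its Taylor polynomial of degree $\le n-2$ at $\tau=0$ subtracted (see \eqref{Gf}), vanishes to order $n-1$ at $\tau=0$; hence $(\tau^{2}+16t^{2})^{-n/2}\,\bigl|\operatorname{sgn}(\tau)\,G_{f}(\tau,t)\bigr|\lesssim(\tau^{2}+t^{2})^{-1/2}$ near the origin and decays rapidly at infinity (as $Nf\in\mathcal{H}_{n}$), so it is an integrable majorant, and dominated convergence yields $(ii)$.

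Part $(iii)$ I would obtain from $(ii)$ by the change of variables $t\mapsto-t$ on $\RR^{2}$: this carries the left-hand integrand of $(ii)$ to that of $(iii)$, leaves $\operatorname{sgn}(\tau)$ unchanged and transforms $G_{f}(\tau,t)$ transparently, and on the right it reproduces the stated expression with $2k+n-\alpha$ replaced by $2k+n+\alpha$.

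I expect the main obstacle to be the choice of majorant in $(ii)$. The naive estimate --- bounding the numerator crudely and using only $\bigl|\,|\tau|+4it+4\epsilon\,\bigr|^{-(k+n)}\le(\tau^{2}+t^{2})^{-(k+n)/2}$, then pairing with $|G_{f}|\lesssim|\tau|^{n-1}$ --- yields an integrand of size $(\tau^{2}+t^{2})^{-(k+1)/2}$ near the origin, which is \emph{not} locally integrable on $\RR^{2}$ once $k\ge 1$. One genuinely needs the inequality above to lower the order of the pole from $k+n$ to $n$, after which the order-$(n-1)$ vanishing of $G_{f}$ at $\tau=0$ coming from \eqref{Gf} just barely secures local integrability. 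This is the technical heart of the argument, and it is also exactly why the $e^{-\epsilon|\lambda|}$ regularization was inserted in \eqref{Phi11} to begin with.
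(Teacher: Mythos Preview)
Your approach is essentially the one taken in the paper. For (i) the paper simply cites the generating identity for Laguerre polynomials (via (4.9) of \cite{[G-S2]}); your Laplace-transform formula is an equivalent route. For (ii) the paper invokes Lemma~2.2 of \cite{[G-S2]}, which is exactly the statement that $G_{f}(\tau,t)/(\tau^{2}+16t^{2})^{n/2}$ is integrable on $\RR^{2}$, together with the observation that $\bigl|(|\tau|-4it)^{\alpha/2}(|\tau|+4it)^{-\alpha/2}\bigr|=1$; your uniform bound $\bigl|(|\tau|-4it-4\epsilon)^{k}/(|\tau|+4it+4\epsilon)^{k+n}\bigr|\le(\tau^{2}+16t^{2})^{-n/2}$ followed by dominated convergence is precisely this argument made explicit.

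Two small points worth tightening. First, $G_{f}$ does \emph{not} decay rapidly at infinity in $\tau$: the subtracted Taylor polynomial in \eqref{Gf} has terms $\partial_{\tau}^{j}Nf(0,t)\,\tau^{j}/j!$ that grow like $|\tau|^{n-2}$, with coefficients Schwartz in $t$. What you need (and what Lemma~2.2 of \cite{[G-S2]} gives) is that the \emph{quotient} $G_{f}/(\tau^{2}+16t^{2})^{n/2}$ is integrable; this still holds because $|\tau|^{n-2}/(\tau^{2}+16t^{2})^{n/2}\le|\tau|^{-2}$ for large $|\tau|$. Second, the substitution $t\mapsto -t$ in (ii) sends $G_{f}(\tau,t)$ to $G_{f}(\tau,-t)$, so it does not literally turn (ii) into (iii); rather, as the paper puts it, one ``changes $e^{-i\lambda t}$ by $e^{i\lambda t}$ and argues like for (ii)'', i.e.\ repeats the same dominated-convergence argument with the conjugate kernel.
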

\begin{proof} From (4.9) of \cite{[G-S2]} we know that (i) follows from the generating identity for the Laguerre polynomials: \begin{equation} \label{GI} \sum\limits_{k\ge 0} L_{k}^{n-1}(t)z^{k} = {1\over {(1-z)^{n}}} e^{- { {zt} \over {1-z}}}. \end{equation}
\par From Lemma 2.2 of \cite{[G-S2]}, which states that the function ${{G_{f}(\tau,t)}\over{(\tau^{2}+16t^{2})^{n\over 2}}}$ is integrable in $\RR^{2}$ and the fact that $\left| {1\over {(|\tau|-4it)^{-{\alpha\over 2}}}} \right| \left|{1\over { (|\tau|+4it)^{\alpha\over 2}}}  \right| = 1$, it follows that the function ${1\over {(|\tau|-4it)^{{{n\over  2}-{\alpha\over 2}}}}} {1\over {(|\tau|+4it)^{{n\over 2}+{\alpha \over 2}}}} G_{f}(\tau,t)$
is integrable in $\RR^{2}$. So we get (ii). For (iii) we just change $e^{-i\lambda t}$ by $e^{i\lambda t}$ and argue like for (ii).
\end{proof}

\par Then, by Proposition 2.2, \begin{align*}
&	<\Phi_{11},f>  =  \beta_{n} \lim\limits_{r\to 1^{-}} \sum\limits_{k\ge 0} {r^{2k+n-\alpha}\over{2k+n-\alpha}} \alpha_{k} \times \\
& \qquad \qquad \times \int\limits_{\RR^{2}} \left( { {|\tau|-4it}\over{ \tau^{2}+16t^{2}}} \right)^{2k+n-\alpha} {{\mbox{sgn}(\tau) G_{f}(\tau,t)} \over {(|\tau|-4it)^{{n\over 2}-{\alpha\over 2}}(|\tau|+4it)^{{n\over 2}+{\alpha\over 2}}}}  d\tau dt \,+ \\
& \qquad + \beta_{n} \lim\limits_{r\to 1^{-}} \sum\limits_{k\ge 0} {r^{2k+n+\alpha}\over{2k+n+ \alpha}} \alpha_{k} \times \\
& \qquad \qquad \int\limits_{\RR^{2}} \left( { {|\tau|+4it}\over{ \tau^{2}+16t^{2}}} \right)^{2k+n+\alpha} {{\mbox{sgn}(\tau) G_{f}(\tau,t)} \over {(|\tau|+4it)^{{n\over 2}+{\alpha\over 2}}(|\tau|-4it)^{{n\over 2}-{\alpha\over 2}}}}d\tau dt,
\end{align*} where $\beta_{n}=4^{n}(n-1)!(-1)^{n}$ y $\alpha_{k}=\binom{k+n-1}{k} (-1)^{k}$.
\par To study $<\Phi_{11},f>$ we split each integral over the left and right halfplanes and take polar coordinates $\tau-4it=\rho e^{i\theta}$ to obtain \begin{align*}
&<\Phi_{11},f>  =  \beta_{n} \lim\limits_{r\to 1^{-}} \sum\limits_{k\ge 0} \alpha_{k} {{r^{2k+n-\alpha}}\over{2k+n-\alpha}} \times \\
	& \qquad \qquad \times 	\int\limits_{0}^{\infty} \left[ \int\limits_{-{\pi\over 2}}^{\pi\over 2} e^{i(2k+n-\alpha)\theta} {1\over{4\rho^{n-1}}} e^{i\alpha\theta} \mbox{ sgn }(\cos \theta) G_{f}(\rho\cos\theta,-{\rho\over 4}\sin\theta)d\theta + \right. \\
	& \qquad \qquad  \qquad \left. +  \int\limits_{\pi\over 2}^{{3\over 2}\pi}  {{e^{-i(2k+n-\alpha)\theta}e^{-i\alpha\theta}}\over{(-1)^{n}4\rho^{n-1}} }  \mbox{ sgn }(\cos\theta) G_{f}(\rho\cos\theta,-{\rho\over 4}\sin\theta) d\theta \right] d\rho \,+ \\
	& \qquad + \beta_{n}\lim\limits_{r\to 1^{-}} \sum\limits_{k\ge 0}  \alpha_{k} { {r^{2k+n+\alpha} } \over{2k+n+\alpha}} \times \\
	&  \qquad \qquad \times 	\int\limits_{0}^{\infty} \left[ \int\limits_{-{\pi\over 2}}^{\pi\over 2} e^{-i(2k+n+\alpha)\theta} {1\over{4\rho^{n-1}}} e^{i\alpha\theta} \mbox{ sgn }(\cos \theta) G_{f}(\rho\cos\theta,-{\rho\over 4}\sin\theta)d\theta +  \right. \\
	& \qquad \qquad \qquad \left. + \int\limits_{\pi\over 2}^{{3\over 2}\pi}  {{e^{i(2k+n+\alpha)\theta}e^{-i\alpha\theta}}\over{(-1)^{n}4\rho^{n-1}} }  \mbox{ sgn }(\cos\theta) G_{f}(\rho\cos\theta,-{\rho\over 4}\sin\theta) d\theta \right] d\rho \end{align*} Now we change variable in the second and fourth term according to $\theta \longleftrightarrow -\theta$. Then, in the fourth term we change variables again according to $\theta \longleftrightarrow \theta + 2\pi$. By proposition 2.2 we can interchange the integration order, so we can write \begin{align*}
&<\Phi_{11},f>  =  \beta_{n} \lim\limits_{r\to 1^{-}} \int\limits_{0}^{\infty} \int\limits_{-{\pi\over 2}}^{\pi\over 2} e^{i\alpha\theta} \times \\
& \qquad \qquad \times \left[ \sum\limits_{k\ge 0} \alpha_{k} \left( {{r^{2k+n-\alpha}}\over{2k+n-\alpha}}  e^{i(2k+n-\alpha)\theta} + {{r^{2k+n+\alpha}}\over{2k+n+\alpha}}  e^{-i(2k+n+\alpha)\theta} \right) \right]  \times \\
& \qquad \qquad \times {1\over {\rho^{n-1}}} \mbox{ sgn }(\cos\theta) G_{f}(\rho\cos\theta,-{\rho\over 4}\sin\theta) d\theta d\rho \quad + \\
 & \qquad  + \quad {{(-1)^{n}}\over 4}\beta_{n} \lim\limits_{r\to 1^{-}} \int\limits_{0}^{\infty} \int\limits_{{\pi\over 2}}^{{3\over 2}\pi} e^{i\alpha\theta} \times \\
 & \qquad \qquad \times \left[ \sum\limits_{k\ge 0} \alpha_{k} \left( {{r^{2k+n-\alpha}}\over{2k+n-\alpha}}  e^{i(2k+n-\alpha)\theta} + {{r^{2k+n+\alpha}}\over{2k+n+\alpha}}  e^{-i(2k+n+\alpha)\theta} \right) \right]  \times \\
& \qquad \qquad \times {1\over {\rho^{n-1}}} \mbox{ sgn }(\cos\theta) G_{f}(\rho\cos\theta,{\rho\over 4}\sin\theta) d\theta d\rho.
\end{align*}

\par Let $I$ denote the real interval $\left[-{\pi\over 2},{\pi\over 2}\right]$. Next let us consider the vector space
$$ \mathcal{X} = \left\{ g\in C^{n-2}(I): g^{(j)}\left(\pm {\pi\over 2}\right) = 0,\, 0\le j\le n-2, \, g^{(n-1)}\in L^{\infty}(I) \right\}.$$
We identify each function $g\in\mathcal{X}$ with the function $\widetilde{g}$ on $S^{1}={\RR\over\ZZ}$, defined equal to $0$ outside $\mbox{supp}(g)$ and we make no distinction between $g$ and $\widetilde{g}$. Thus, if $g\in\mathcal{X}$ then $g\in C^{n-2}(S^{1})$ with $g^{(n-1)}\in L^{\infty}(S^{1})$. Observe that if $g\in\mathcal{X}$, then also $e^{i\alpha\theta}g\in\mathcal{X}$. The topology on $\mathcal{X}$ is given by $||g||_{\mathcal{X}}=\mbox{max}_{0\le j\le n-1} ||g^{(j)}||_{\infty}$.

\par For $k\in \ZZ$ we set $\alpha_{k}=\binom{k+n-1}{k} (-1)^{k}$. Now let us define
\begin{equation}
\label{Psir}
\Psi_{r,\alpha}(\theta)  =  \sum\limits_{k\ge 0} \alpha_{k} \left( {{r^{2k+n-\alpha}e^{i(2k+n-\alpha)\theta}}\over{2k+n-\alpha}}   + {{r^{2k+n+\alpha}e^{-i(2k+n+\alpha)\theta}}\over{2k+n+\alpha}}   \right),
\end{equation}
\begin{equation}
\label{Psi}
<\Psi_{\alpha},g>  = < \sum\limits_{k\ge 0} \alpha_{k} \left( {{e^{i(2k+n-\alpha)\theta}}\over{2k+n-\alpha}}   + {{e^{-i(2k+n+\alpha)\theta}}\over{2k+n+\alpha}}   \right) , g>, 
\end{equation}
and let us see that $\Psi_{\alpha}\in\mathcal{X}'$, the dual space of $\mathcal{X}$. Indeed,
\begin{equation}
\label{cotaPsi}
\begin{array}{rcl}
|<\Psi_{\alpha},g>| & \le & |e^{i\alpha\theta}| \sum\limits_{k\ge 0} \binom{k+n-1}{k} ({{|<e^{i(2k+n)\theta},g>|}\over{|2k+n-\alpha|}} + |{{<e^{-i(2k+n)\theta},g>|}\over{|2k+n+\alpha|}}).
\end{array}
\end{equation}
If $\widehat{g}(n)=<g,e^{i n\theta}>$ denotes the $n-$th Fourier coefficient of $g$, then we have that
\begin{align*}
& |<\Psi_{\alpha},g>| \le  c \sum\limits_{k\ge 0} {{k^{n-1}}\over{|2k+n|^{n-1}}} \left( {{|\widehat{g^{(n-1)}}(2k+n)|}\over{|2k+n-\alpha|}} + {{|\widehat{g^{(n-1)}}(-2k-n)|}\over{|2k+n+\alpha|}} \right)  \le \\
& \qquad \qquad \le  c \sum\limits_{k\ge 0} {1\over k} {|\widehat{g^{(n-1)}}(2k+n)|} + {1\over k} {|\widehat{g^{(n-1)}}(-2k-n)|} \le \\
& \qquad \qquad \le c \left( \sum\limits_{k\ge 0} {1\over{k^{2}}} \right)^{1\over 2} ||\widehat{g^{(n-1)}}||_{L^{2}}, 
\end{align*}
where we used the Cauchy-Schwarz inequality. Observe that the constants $c$ are not the same on each expression.
\par By Abel's Lemma, $\lim_{r\to 1^{-}} \Psi_{r,\alpha}=\Psi_{\alpha}$ in $\mathcal{X}'$ that is, with respect to the weak convergence topology. Similarly, if $J$ denotes the real interval $\left[{\pi\over 2},{3\over 2}\pi \right]$, we define the space,
$$\mathcal{Y}=\left\{ g\in C^{n-2}(J): g^{(j)}\left({\pi\over 2}\right) = g^{(j)}\left({3\over 2}\pi\right) = 0,\, 0\le j\le n-2, \, g^{(n-1)}\in L^{\infty}(J) \right\},$$ 
and we obtain that $\Psi_{\alpha}$ is well defined in $\mathcal{Y}'$ and $\lim\limits_{r\to 1^{-}} \Psi_{r,\alpha} = \Psi_{\alpha}$ in $\mathcal{Y}'$.
\par Our aim now is to compute $\Psi_{\alpha}$.
\par From Proposition 3.7 of \cite{[G-S2]} we know that if $\Theta\in\mathcal{D}'(S^{1})$ is defined by
\begin{equation}
\Theta(\theta)= i\sum\limits_{k\ge 0} \binom{k+n-1}{k} (-1)^{k} e^{i(2k+n)\theta},
\label{Theta}
\end{equation}
then for even $n$ we have that
\begin{equation}
\mathfrak{Re}\Theta(\theta)={d\over {d\theta}} Q_{n-2}\left({d\over{d\theta}}\right) (\delta_{\pi\over 2}+\delta_{-{\pi\over 2}})=\sum\limits_{j=0}^{n-2} c_{j}\left(\delta^{(j+1)}_{\pi\over 2}+\delta^{(j+1)}_{-{\pi\over 2}}\right),
\label{ThetaPAR}
\end{equation}
where $Q_{n-2}$ is a polynomial of degree $n-2$; and for odd $n$ we have that
\begin{equation}
\mathfrak{Re}\Theta(\theta)=d_{0}{d\over {d\theta}}\widetilde{H} + {d\over {d\theta}} Q_{n-2}\left({d\over{d\theta}}\right) (\delta_{\pi\over 2}-\delta_{-{\pi\over 2}}) = d_{0}(\delta_{-{\pi\over2}} - \delta_{\pi\over 2}) + \sum\limits_{j=0}^{n-2} c_{j} (\delta^{(j+1)}_{\pi\over 2}-\delta^{(j+1)}_{-{\pi\over 2}}),
\label{ThetaIMPAR}
\end{equation}
where $Q_{n-2}$ is a polynomial of degree $n-2$, and $\widetilde{H}(\theta)=H(\cos\theta)$.
\par Let us recall the generating identity for the Laguerre polynomials (\ref{GI}), and take $t=0$ and $z=-r^{2}e^{2i\theta}$. We get
\begin{equation}
\sum\limits_{k\ge 0} \binom{k+n-1}{k} (-1)^{k} r^{2k+n} e^{i(2k+n)\theta} = \left( { {r e^{i\theta}}\over {1+r^{2}e^{2i\theta}}} \right)^{n}.
\label{CU}
\end{equation}
\par We also need a couple of results:
\begin{lem} For a fixed $r>1$ the functions $\alpha\to \Psi_{r,\alpha}(0)$ and $\alpha\to \lim\limits_{r\to 1^{-}} \Psi_{r,\alpha}(0)$ are analytic on $\Omega=\CC\backslash F$, where $F=\{2k+n:k\in\ZZ\}$.
\end{lem}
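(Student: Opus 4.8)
The plan is to treat the two functions separately, the first by a routine Weierstrass argument and the second by producing a closed form and then continuing it analytically. Throughout I take $r$ in the relevant range $0<r<1$ in which the series \eqref{Psir} is absolutely convergent (this is the regime appearing in the Abel limit $r\to 1^{-}$ above).

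\emph{The first function.} At $\theta=0$ write $\Psi_{r,\alpha}(0)=\sum_{k\ge 0}u_{k}(\alpha)$ with $u_{k}(\alpha)=\alpha_{k}\bigl(\tfrac{r^{2k+n-\alpha}}{2k+n-\alpha}+\tfrac{r^{2k+n+\alpha}}{2k+n+\alpha}\bigr)$. Since $r>0$, the numerators $r^{2k+n\mp\alpha}=r^{2k+n}e^{\mp\alpha\log r}$ are entire and nonvanishing in $\alpha$, so each $u_{k}$ is meromorphic on $\CC$ with at most simple poles at $\alpha=2k+n$ and $\alpha=-(2k+n)$; both lie in $F$, hence $u_{k}$ is holomorphic on $\Omega$. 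On a compact $K\subset\Omega$ one has $\delta:=\operatorname{dist}(K,F)>0$, hence $|2k+n\mp\alpha|\ge\delta$ on $K$, while $|\alpha_{k}|=\binom{k+n-1}{k}\le C(1+k)^{n-1}$ and $|r^{2k+n\mp\alpha}|\le C_{K}\,r^{2k}$ on $K$ because $0<r<1$. Thus $|u_{k}(\alpha)|\le C_{K}\delta^{-1}(1+k)^{n-1}r^{2k}$ on $K$ with $\sum_{k}(1+k)^{n-1}r^{2k}<\infty$, so by the Weierstrass $M$-test and the theorem on locally uniform limits of holomorphic functions, $\alpha\mapsto\Psi_{r,\alpha}(0)$ is holomorphic on $\Omega$.

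\emph{The second function: an integral representation.} The same route fails for $\lim_{r\to1^{-}}\Psi_{r,\alpha}(0)$ since the $M$-test bound degenerates as $r\to1$; instead I would first obtain a closed form. By absolute convergence and the generating identity \eqref{CU} at $\theta=0$, i.e. $\sum_{k\ge0}\alpha_{k}t^{2k+n}=(t/(1+t^{2}))^{n}$, termwise integration over $t\in[0,r]$ (Fubini--Tonelli, using $\sum_{k}|\alpha_{k}|t^{2k+n}=t^{n}(1-t^{2})^{-n}$) gives, for $|\operatorname{Re}\alpha|<n$,
\[
\Psi_{r,\alpha}(0)=\int_{0}^{r}\frac{t^{n-\alpha-1}+t^{n+\alpha-1}}{(1+t^{2})^{n}}\,dt .
\]
Since the integrand is dominated on $[0,1]$ by an integrable function for $|\operatorname{Re}\alpha|<n$, letting $r\to1^{-}$ yields $\lim_{r\to1^{-}}\Psi_{r,\alpha}(0)=\int_{0}^{1}\frac{t^{n-\alpha-1}+t^{n+\alpha-1}}{(1+t^{2})^{n}}\,dt$, which is holomorphic on the strip $\{|\operatorname{Re}\alpha|<n\}$ by Morera's theorem (or differentiation under the integral sign).

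\emph{The second function: continuation to $\Omega$.} This last step is the main obstacle: the strip $\{|\operatorname{Re}\alpha|<n\}$ is strictly smaller than $\Omega$, and the convergence $\Psi_{r,\alpha}(0)\to\lim$ is not locally uniform on $\Omega$. I would remove this by a Taylor-subtraction continuation. Expand $(1+t^{2})^{-n}=\sum_{j\ge0}c_{j}t^{2j}$ (with $c_{j}=(-1)^{j}\binom{n+j-1}{j}$, convergent for $|t|<1$) and, for each $N$, write $(1+t^{2})^{-n}=\sum_{j=0}^{N}c_{j}t^{2j}+t^{2N+2}R_{N}(t)$ with $R_{N}$ continuous on $[0,1]$. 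For $\operatorname{Re}\alpha<n$,
\[
\int_{0}^{1}\frac{t^{n-\alpha-1}}{(1+t^{2})^{n}}\,dt=\sum_{j=0}^{N}\frac{c_{j}}{\,n+2j-\alpha\,}+\int_{0}^{1}t^{\,n+2N+1-\alpha}R_{N}(t)\,dt ,
\]
whose right-hand side is holomorphic on $\{\operatorname{Re}\alpha<n+2N+2\}\setminus\{n,n+2,\dots,n+2N\}$ and, by the identity theorem, is the analytic continuation of the left-hand side there; letting $N\to\infty$ gives a holomorphic continuation to $\CC\setminus\{n+2j:j\ge0\}$. The same argument with $\alpha\mapsto-\alpha$ continues $\int_{0}^{1}t^{n+\alpha-1}(1+t^{2})^{-n}dt$ holomorphically to $\CC\setminus\{-(n+2j):j\ge0\}$. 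Since $\{n+2j:j\ge0\}\cup\{-(n+2j):j\ge0\}\subseteq F$, the sum extends holomorphically to $\Omega=\CC\setminus F$. Finally one checks this continuation really equals $\lim_{r\to1^{-}}\Psi_{r,\alpha}(0)$ on all of $\Omega$: applying the same Taylor-subtraction to $\Psi_{r,\alpha}(0)$ for fixed $r<1$ (legitimate by the first part and the identity theorem on the relevant half-plane minus finitely many points) and then letting $r\to1^{-}$ termwise — each $r^{\,n+2j-\alpha}\to1$ and the remainder integral converges by dominated convergence — reproduces exactly the above continuation. Hence $\alpha\mapsto\lim_{r\to1^{-}}\Psi_{r,\alpha}(0)$ is holomorphic on $\Omega$, completing the proof.
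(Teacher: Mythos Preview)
Your proof is correct. For the first function both you and the paper run the same Weierstrass $M$-test argument. For the second function the approaches genuinely differ. The paper computes $\frac{d}{dr}\Psi_{r,\alpha}(0)=(r^{-\alpha-1}+r^{\alpha-1})\bigl(\frac{r}{1+r^{2}}\bigr)^{n}$ directly from the generating identity \eqref{CU}, observes that this is bounded uniformly for $r\in[1/2,1)$ and $\alpha$ in a compact $K\subset\Omega$, and concludes that $\{\Psi_{r,\alpha}(0)\}_{r}$ is uniformly Cauchy on $K$ as $r\to1^{-}$, hence the limit is analytic. You instead integrate the same generating identity to get the closed form $\Psi_{r,\alpha}(0)=\int_{0}^{r}(t^{n-\alpha-1}+t^{n+\alpha-1})(1+t^{2})^{-n}\,dt$ on the strip $|\operatorname{Re}\alpha|<n$, pass to $r=1$, and then extend to all of $\Omega$ by a Taylor-remainder continuation, finally matching this continuation with $\lim_{r\to1^{-}}\Psi_{r,\alpha}(0)$ via the identity theorem. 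The paper's route is shorter and avoids any discussion of domains of convergence or continuation; your route is more explicit and in fact produces exactly the integral representation that the paper derives separately a few lines later (in the proof of Proposition~2.5) to evaluate $C_{\alpha}=\int_{0}^{1}(r^{-\alpha-1}+r^{\alpha-1})\bigl(\frac{r}{1+r^{2}}\bigr)^{n}\,dr$. So your argument effectively anticipates and absorbs part of the next proposition.
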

\begin{proof} Let $K$ be a compact set, $K\subset\Omega$. It is easy to see that for fixed $r$ the series (\ref{Psir}) converges uniformly, since $$\left| \Psi_{r,\alpha}(0)\right| \le \max\limits_{\alpha\in K} |r^{\mathfrak{ \alpha}}| \left( {r\over{1+r^{2}}} \right)^{n} d(K,F).$$
Also, for $\alpha\in\Omega$ there exists $\lim\limits_{r\to 1^{-}} \Psi_{r,\alpha}(0)$. Indeed, if $0\le r_{1}<r<r_{2}<1$, from the Mean Value Theorem we have that for some $\xi\in(r_{1},r_{2})$, \begin{align*}
&\Psi_{r_{1},\alpha}(0)-\Psi_{r_{2},\alpha}(0)= {d\over dr}\Psi_{\xi,\alpha}(0)(r_{2}-r_{1}) = (\xi^{-\alpha-1}+\xi^{\alpha-1})\sum\limits_{k\ge 0} \alpha_{k} \xi^{2k+n} (r_{2}-r_{1})= \\
& \qquad = (\xi^{-\alpha-1}+\xi^{\alpha-1}) \left( { \xi\over{1+\xi^{2}}} \right)^{n}(r_{2}-r_{1}),\end{align*} where the last equality holds from (\ref{CU}). Hence $$|\Psi_{r_{1},\alpha}(0)-\Psi_{r_{2},\alpha}(0)|\le c(\xi) |r_{2}-r_{1}|,$$ where $c(\xi)$ is a constant which depends on $\xi$. Moreover, for $\alpha\in K$ and $\xi\in\left[ {1\over 2}, 1\right]$, $\xi^{n-\alpha-1}+\xi^{n+\alpha-1}$ is bounded in $K\times \left[ {1\over 2}, 1\right]$, so the convergence is uniform, hence $\alpha\to\lim\limits_{r\to 1^{-}} \Psi_{r,\alpha}(0)$ is an analytic function.
\end{proof}

\begin{lem} Let $0<\delta<{\pi\over 4}$. For $0<r<1$ and $0\le |\theta|<\delta$ we have that
$$|\Psi_{r,\alpha}(\theta)-\Psi_{r,\alpha}(0)|\le \left(\max\limits_{0\le|\theta|<\delta} e^{|\mathfrak{Im}\alpha||\theta|} \right) \left(a|r^{-\alpha}-r^{\alpha}| + b |r^{\alpha}|(1-r)\right)|\theta|,$$ with $a,b$ positive constants. Also for $0\le|\theta-\pi|<\delta<{\pi\over 4}$, $$|\Psi_{r,\alpha}(\theta)-\Psi_{r,\alpha}(\pi)|\le \left(\max\limits_{0\le|\theta-\pi|<\delta} e^{|\mathfrak{Im}\alpha||\theta|} \right) \left(a|r^{-\alpha}-r^{\alpha}| + b |r^{\alpha}|(1-r)\right)|\theta-\pi|,$$ with $a,b$ positive constants.
\end{lem}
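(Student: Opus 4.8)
The goal is to estimate the difference $\Psi_{r,\alpha}(\theta) - \Psi_{r,\alpha}(0)$ for small $|\theta|$, and similarly near $\theta = \pi$, uniformly in $0 < r < 1$. The natural device is to write the difference as an integral of the derivative: $\Psi_{r,\alpha}(\theta) - \Psi_{r,\alpha}(0) = \int_0^\theta \frac{d}{ds}\Psi_{r,\alpha}(s)\,ds$, so it suffices to bound $\left|\frac{d}{d\theta}\Psi_{r,\alpha}(\theta)\right|$ on $|\theta| < \delta$ by the quantity $\left(\max_{|\theta|<\delta} e^{|\mathfrak{Im}\,\alpha||\theta|}\right)\left(a|r^{-\alpha} - r^{\alpha}| + b|r^{\alpha}|(1-r)\right)$, after which integrating in $\theta$ supplies the factor $|\theta|$. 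Differentiating the series (\ref{Psir}) term by term — legitimate for $r < 1$ since the resulting series still converges absolutely and uniformly — one gets
\[
\frac{d}{d\theta}\Psi_{r,\alpha}(\theta) = i\sum_{k\ge 0}\alpha_k\left(r^{2k+n-\alpha}e^{i(2k+n-\alpha)\theta} - r^{2k+n+\alpha}e^{-i(2k+n+\alpha)\theta}\right),
\]
where the denominators $2k+n\mp\alpha$ have cancelled. Pulling out the factors $r^{-\alpha}e^{-i\alpha\theta}$ and $r^{\alpha}e^{i\alpha\theta}$ respectively and using the closed form (\ref{CU}), namely $\sum_{k\ge 0}\binom{k+n-1}{k}(-1)^k r^{2k+n}e^{i(2k+n)\theta} = \left(\frac{re^{i\theta}}{1+r^2e^{2i\theta}}\right)^n$, one obtains
\[
\frac{d}{d\theta}\Psi_{r,\alpha}(\theta) = i\left(r^{-\alpha}e^{-i\alpha\theta} - r^{\alpha}e^{i\alpha\theta}\right)\left(\frac{re^{i\theta}}{1+r^2e^{2i\theta}}\right)^n,
\]
so the whole problem reduces to controlling the scalar factor $r^{-\alpha}e^{-i\alpha\theta} - r^{\alpha}e^{i\alpha\theta}$ together with a uniform bound on $\left|\frac{re^{i\theta}}{1+r^2e^{2i\theta}}\right|^n$ for $0 < r < 1$ and $|\theta|$ small.

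The second factor is the easy one: for $0 < r < 1$ and $|\theta| < \delta < \pi/4$, the denominator $1 + r^2e^{2i\theta}$ stays bounded away from $0$ (its real part is $1 + r^2\cos 2\theta \ge 1 + r^2\cos 2\delta > 0$), while the numerator has modulus $r \le 1$; hence $\left|\frac{re^{i\theta}}{1+r^2e^{2i\theta}}\right|^n \le C$ for a constant $C = C(\delta,n)$ independent of $r$ and $\theta$. For the scalar factor I would write $r^{-\alpha}e^{-i\alpha\theta} - r^{\alpha}e^{i\alpha\theta} = (r^{-\alpha} - r^{\alpha})e^{-i\alpha\theta} + r^{\alpha}(e^{-i\alpha\theta} - e^{i\alpha\theta})$; the first piece is bounded by $|r^{-\alpha} - r^{\alpha}|\,e^{|\mathfrak{Im}\,\alpha||\theta|}$, and the second, using $|e^{-i\alpha\theta} - e^{i\alpha\theta}| = |2\sin(\alpha\theta)| \le 2|\alpha||\theta|\max_{|s|\le|\theta|}e^{|\mathfrak{Im}\,\alpha||s|}$ (or equivalently a direct estimate $|e^{w}-e^{-w}|\le 2|w|e^{|w|}$ with $w = -i\alpha\theta$), contributes $r^{\alpha}$ times something of size $|\theta|$. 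To match the stated form, with its clean factor $(1-r)$ on the second term rather than $|\theta|$, I would instead estimate more carefully: bound $\left|\frac{d}{d\theta}\Psi_{r,\alpha}(\theta)\right|$ by $C\,e^{|\mathfrak{Im}\,\alpha||\theta|}\left(|r^{-\alpha}-r^{\alpha}| + |r^{\alpha}||\theta|\right)$ and then, when integrating $\int_0^\theta$, observe that the $|\theta|$ inside already came paired with an extra power, so after integration the total picks up a further $|\theta|$; meanwhile the persistent $(1-r)$ in the statement is most naturally obtained by splitting the original sum differently — grouping $\frac{r^{2k+n-\alpha}}{2k+n-\alpha} - \frac{r^{2k+n+\alpha}}{2k+n+\alpha}$ at $\theta = 0$ as in Lemma 2.4's proof and isolating the $r^{\alpha}(1-r)$-type remainder — but either bookkeeping yields an estimate of the asserted shape with suitable $a, b$.

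The statement near $\theta = \pi$ is handled identically: on $|\theta - \pi| < \delta < \pi/4$ one has $e^{2i\theta} = e^{2i(\theta-\pi)}$, so $1 + r^2e^{2i\theta} = 1 + r^2e^{2i(\theta-\pi)}$ is again bounded away from $0$, and the same computation with $\theta$ replaced by $\theta - \pi$ in the phases (noting $e^{i(2k+n)\pi} = (-1)^n$ factors out harmlessly) gives $\left|\frac{d}{d\theta}\Psi_{r,\alpha}(\theta)\right| \le C\,e^{|\mathfrak{Im}\,\alpha||\theta|}\left(|r^{-\alpha}-r^{\alpha}| + |r^{\alpha}||\theta - \pi|\right)$ near $\pi$, and integration from $\pi$ to $\theta$ supplies the factor $|\theta - \pi|$. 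The main obstacle, such as it is, is purely bookkeeping: getting the two terms in the bound to appear in exactly the claimed form $a|r^{-\alpha}-r^{\alpha}| + b|r^{\alpha}|(1-r)$ rather than with a $|\theta|$ in place of $(1-r)$, which requires being slightly careful about how one splits the series before versus after differentiating — the honest resolution is that one can absorb $|\theta| \le \delta \le \pi/4$ against a constant, or, to get $(1-r)$ genuinely, apply the Mean Value Theorem argument of Lemma 2.4 at each $\theta$ rather than only at $0$. No deep analytic input beyond the generating identity (\ref{CU}) and elementary estimates on $e^w$ is needed.
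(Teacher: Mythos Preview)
Your overall strategy---differentiate $\Psi_{r,\alpha}$, bound the derivative, apply the Mean Value Theorem---is exactly the paper's approach. However, your closed-form expression for the derivative is wrong, and the error is not cosmetic: it is precisely what causes you to miss the $(1-r)$ factor.

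When you factor the second sum, you should get
\[
\sum_{k\ge 0}\alpha_k\, r^{2k+n+\alpha}e^{-i(2k+n+\alpha)\theta}
= r^{\alpha}e^{-i\alpha\theta}\sum_{k\ge 0}\alpha_k\, r^{2k+n}e^{-i(2k+n)\theta}
= r^{\alpha}e^{-i\alpha\theta}\,\overline{A(\theta)},
\]
where $A(\theta)=\bigl(\tfrac{re^{i\theta}}{1+r^{2}e^{2i\theta}}\bigr)^{n}$; the exponent in the second sum is $e^{-i(2k+n)\theta}$, not $e^{+i(2k+n)\theta}$, so (\ref{CU}) gives the \emph{conjugate} of $A$. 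Hence the correct derivative is
\[
\tfrac{d}{d\theta}\Psi_{r,\alpha}(\theta)
= i\,e^{-i\alpha\theta}\bigl(r^{-\alpha}A(\theta)-r^{\alpha}\overline{A(\theta)}\bigr)
= i\,e^{-i\alpha\theta}\Bigl((r^{-\alpha}-r^{\alpha})A(\theta)+2i\,r^{\alpha}\,\mathfrak{Im}\,A(\theta)\Bigr),
\]
which is the paper's formula. The crucial point you are missing is that $\mathfrak{Im}\,A(\theta)\to 0$ as $r\to 1^{-}$, uniformly for $|\theta|<\pi/4$ (indeed $A(\theta)\to (2\cos\theta)^{-n}\in\RR$); this is quoted from Proposition~3.1 of \cite{[G-S2]} and is what produces the factor $b|r^{\alpha}|(1-r)$ in the statement.

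With your (incorrect) formula the scalar factor is $r^{-\alpha}e^{-i\alpha\theta}-r^{\alpha}e^{i\alpha\theta}$, whose second piece $r^{\alpha}(e^{-i\alpha\theta}-e^{i\alpha\theta})$ is of size $|r^{\alpha}|\,|\theta|$ and does \emph{not} tend to $0$ as $r\to 1^{-}$. Absorbing $|\theta|\le\delta$ into a constant, as you suggest, yields a bound $a|r^{-\alpha}-r^{\alpha}|+b'|r^{\alpha}|$ that fails to vanish in the limit, which defeats the purpose of the lemma (it is used in Proposition~2.6 precisely to show certain integrals go to zero as $r\to 1^{-}$). Your other suggestion---``apply the Mean Value Theorem argument of Lemma~2.4 at each $\theta$''---is circular, and the appeal to unspecified ``bookkeeping'' does not repair the gap. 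Once you correct the derivative to involve $\overline{A}$ and invoke the vanishing of $\mathfrak{Im}\,A$, the rest of your argument goes through.
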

\begin{proof} We will estimate $|\Psi_{r,\alpha}(\theta)-\Psi_{r,\alpha}(0)|$ for $0<|\theta|<\delta<{\pi\over 4}$, the other case is similar. We have
\begin{align*}
& {d\over{d\theta}}\Psi_{r,\alpha}(\theta)  =  ie^{-i\alpha\theta} \sum\limits_{k\ge 0} \alpha_{k} r^{2k+n} \left( (r^{-\alpha}-r^{\alpha})e^{i(2k+n)\theta} + (e^{i(2k+n)\theta}-e^{-i(2k+n)\theta})r^{\alpha}\right)  =\\
& \qquad = ie^{-i\alpha\theta} \left( (r^{-\alpha}-r^{\alpha}) \left( {{re^{i\theta}}\over{1+r^{2}e^{2i\theta}}} \right)^{n} + 2i r^{\alpha} \mathfrak{Im} \left( { {re^{i\theta}} \over  {1+r^{2}e^{i2\theta}} } \right)^{n} \right), \end{align*} 
because of (\ref{CU}). We have that
\begin{equation}
\left| {d\over d\theta}\Psi_{r,\alpha}(\theta) \right|  \le e^{|\mathfrak{Im}\alpha||\theta|} \left( |r^{-\alpha}-r^{\alpha}| \left| \left( { {re^{i\theta}} \over  {1+r^{2}e^{i2\theta}} } \right)^{n} \right| + 2 |r^{\alpha}| \left| \mathfrak{Im} \left( { {re^{i\theta}} \over  {1+r^{2}e^{i2\theta}} } \right)^{n} \right| \right). 
\label{acotdpsir}
\end{equation}
\par From Proposition 3.1 of \cite{[G-S2]} we know that $\left| \mathfrak{Im} \left( { {re^{i\theta}} \over  {1+r^{2}e^{i2\theta}} } \right)^{n} \right| \longrightarrow   0$ as $r\to 1^{-}$, uniformly for $|\theta|<{\pi\over 4}$, $|\theta - \pi|<{\pi\over 4}$. Also, $\left| \left( { {re^{i\theta}} \over  {1+r^{2}e^{i2\theta}} } \right)^{n} \right| \le c$, for a constant $c$. Then $ \left| {d\over d\theta} \Psi_{r,\alpha} (\theta) \right| \longrightarrow 0$ uniformly on $|\theta|<{\pi\over 4}$ as $r\to 1^{-}$, and we get the desired inequality by applying the Mean Value Theorem around $0$.
\end{proof}

\par Now we can state the following
\begin{prop} For $f\in\mathcal{X}$ we have that $<\Psi_{\alpha},f>=C_{\alpha}<1,f>$, where $C_{\alpha}={{\Gamma\left({{n+\alpha}\over 2}\right) \Gamma\left({{n-\alpha}\over 2}\right)}\over{(n-1)!}}$; and for $f\in\mathcal{Y}$ we have that $<\Psi_{\alpha},f>=\widetilde{C_{\alpha}}<1,f>$, where $\widetilde{C_{\alpha}}=(-1)^{n} e^{-i\alpha\pi} C_{\alpha}$.
\end{prop}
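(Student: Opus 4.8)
The plan is to determine $\Psi_{\alpha}$ explicitly as a distribution on $S^{1}$ and then read off its pairing with $\mathcal{X}$ and $\mathcal{Y}$ from the fact, recorded in $(\ref{ThetaPAR})$ and $(\ref{ThetaIMPAR})$, that $\mathfrak{Re}\,\Theta$ is supported at the two points $\pm\pi/2$. Since it is already established that $\lim_{r\to 1^{-}}\Psi_{r,\alpha}=\Psi_{\alpha}$ in $\mathcal{X}'$ and in $\mathcal{Y}'$, I will work with the Abel--regularized functions $\Psi_{r,\alpha}$ of $(\ref{Psir})$ and pass to the limit only at the end. Differentiating $(\ref{Psir})$ term by term — in each summand the factor $2k+n\mp\alpha$ coming down from the exponential cancels the denominator — and summing by the Laguerre generating identity $(\ref{CU})$ gives, exactly as in the proof of Lemma 2.4,
\begin{equation*}
\frac{d}{d\theta}\Psi_{r,\alpha}(\theta)=e^{-i\alpha\theta}\bigl(r^{-\alpha}\Theta_{r}(\theta)+r^{\alpha}\overline{\Theta_{r}(\theta)}\bigr),\qquad \Theta_{r}(\theta):=i\Bigl(\frac{re^{i\theta}}{1+r^{2}e^{2i\theta}}\Bigr)^{n}.
\end{equation*}
Integrating from $\theta=0$ yields
\begin{equation*}
\Psi_{r,\alpha}(\theta)=\Psi_{r,\alpha}(0)+\int_{0}^{\theta}e^{-i\alpha s}\bigl(r^{-\alpha}\Theta_{r}(s)+r^{\alpha}\overline{\Theta_{r}(s)}\bigr)\,ds .
\end{equation*}

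Letting $r\to 1^{-}$, the factors $r^{\pm\alpha}\to 1$ and $\Theta_{r}\to\Theta$ in $\mathcal{D}'(S^{1})$ (this is $(\ref{CU})$ together with Proposition 3.1 of \cite{[G-S2]}, which makes the passage to the limit uniform both away from and near $\pm\pi/2$), giving $\Psi_{\alpha}(\theta)=\Psi_{\alpha}(0)+\int_{0}^{\theta}e^{-i\alpha s}\,2\,\mathfrak{Re}\,\Theta(s)\,ds$ as distributions on $S^{1}$. By $(\ref{ThetaPAR})$ and $(\ref{ThetaIMPAR})$, $\mathfrak{Re}\,\Theta$ is a finite linear combination of the functionals $\delta^{(j)}_{\pm\pi/2}$ with $0\le j\le n-1$, supported at $\{\pm\pi/2\}$; hence the above primitive is the sum of a function locally constant off $\{\pm\pi/2\}$ and a distribution supported at $\{\pm\pi/2\}$ of order at most $n-2$. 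It follows that $\Psi_{\alpha}$ agrees with a constant on each of the open arcs $(-\pi/2,\pi/2)$ and $(\pi/2,3\pi/2)$, and that $\Psi_{\alpha}$ minus that constant is, at each endpoint, a combination of $\delta^{(j)}_{\pm\pi/2}$ with $j\le n-2$. Since every element of $\mathcal{X}$ (resp.\ $\mathcal{Y}$) vanishes together with its derivatives up to order $n-2$ at the two endpoints of $I$ (resp.\ $J$), all these functionals annihilate it — and no higher--order point functional can occur, because the $\mathcal{X}$--norm controls only derivatives up to order $n-1$, the top derivative being merely in $L^{\infty}$, so $\delta^{(n-1)}_{\pm\pi/2}\notin\mathcal{X}'$. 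Therefore $\langle\Psi_{\alpha},f\rangle$ equals the relevant constant times $\langle 1,f\rangle$ on each of $\mathcal{X}$ and $\mathcal{Y}$.

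It remains to evaluate the two constants. On the arc through $0$ the constant is $\lim_{r\to 1^{-}}\Psi_{r,\alpha}(0)=\lim_{r\to 1^{-}}\sum_{k\ge 0}\alpha_{k}\bigl(\frac{r^{2k+n-\alpha}}{2k+n-\alpha}+\frac{r^{2k+n+\alpha}}{2k+n+\alpha}\bigr)$. For $\alpha$ with $|\mathfrak{Re}\,\alpha|<n$ I write $\frac{r^{2k+n\mp\alpha}}{2k+n\mp\alpha}=\int_{0}^{r}t^{2k+n\mp\alpha-1}\,dt$, interchange the sum with the integral, and use $(\ref{CU})$ at $\theta=0$ to get $\Psi_{r,\alpha}(0)=\int_{0}^{r}\frac{t^{n-\alpha-1}+t^{n+\alpha-1}}{(1+t^{2})^{n}}\,dt$; letting $r\to 1^{-}$ and substituting $u=t^{2}$, then $u\mapsto 1/u$ in one of the two summands, turns this into a Beta integral whose value is $C_{\alpha}$. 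Both sides of the claimed identity are holomorphic in $\alpha$ on $\Omega$ — the left by Lemma 2.3, the right through the Gamma factors — so the identity extends from this strip to all admissible $\alpha$. On the arc through $\pi$ the constant is $\lim_{r\to 1^{-}}\Psi_{r,\alpha}(\pi)$, and since $e^{\pm i(2k+n)\pi}=(-1)^{n}$ every exponential in $(\ref{Psir})$ produces the common phase $(-1)^{n}e^{-i\alpha\pi}$, whence $\Psi_{r,\alpha}(\pi)=(-1)^{n}e^{-i\alpha\pi}\Psi_{r,\alpha}(0)$ and the constant is $\widetilde{C_{\alpha}}=(-1)^{n}e^{-i\alpha\pi}C_{\alpha}$. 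This yields both halves of the proposition.

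The step I expect to be the main obstacle is the middle one: justifying rigorously, with the limited regularity available — the spaces $\mathcal{X},\mathcal{Y}$ consist only of $C^{n-2}$ functions with $L^{\infty}$ top derivative, and $\Psi_{\alpha}$ itself is only a weak limit — that the passage $r\to 1^{-}$ genuinely produces the endpoint--supported remainder described above, of order at most $n-2$, so that it is really killed by $\mathcal{X}$ and $\mathcal{Y}$. The quantitative inputs for this are the control of $\bigl|\mathfrak{Im}\bigl(\frac{re^{i\theta}}{1+r^{2}e^{2i\theta}}\bigr)^{n}\bigr|$ as $r\to 1^{-}$ from Proposition 3.1 of \cite{[G-S2]}, the Mean Value Theorem bounds of Lemma 2.4 near $0$ and $\pi$, and the estimate $(\ref{cotaPsi})$; assembling these into a clean statement about the order of $\Psi_{\alpha}-\mathrm{const}$ at $\pm\pi/2$ is the delicate part, after which the Beta--integral evaluation and the analytic continuation are routine.
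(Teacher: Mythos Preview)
Your proposal and the paper's proof share the same key observation --- that $\Psi_{\alpha}'$ is essentially $2e^{-i\alpha\theta}\mathfrak{Re}\,\Theta$, which by $(\ref{ThetaPAR})$--$(\ref{ThetaIMPAR})$ is supported at $\pm\pi/2$ --- and the evaluation of the constants via $(\ref{CU})$, a Beta integral, and analytic continuation is virtually identical. The difference is in how the ``$\Psi_{\alpha}$ is constant on test functions'' step is executed. You try to describe $\Psi_{\alpha}$ itself as a distribution on $S^{1}$: constant on each arc plus an endpoint--supported remainder of order $\le n-2$, and then argue that this remainder annihilates $\mathcal{X}$. As you yourself flag, this is the delicate point: $\Psi_{\alpha}$ is only given as a weak limit in $\mathcal{X}'$, and making precise sense of ``$\int_{0}^{\theta}e^{-i\alpha s}\,2\mathfrak{Re}\,\Theta(s)\,ds$'' and its order at $\pm\pi/2$ in that topology requires work. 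The paper sidesteps this entirely by a duality trick: for $f\in\mathcal{X}$ with $\int f=0$, set $F(\theta)=\int_{-\pi/2}^{\theta}f$, observe that $F$ is one order smoother than a generic element of $\mathcal{X}$ (indeed $F\in C^{n-1}$ with $F^{(j)}(\pm\pi/2)=0$ for $0\le j\le n-1$), and then integrate by parts term by term in the series to get $\langle\Psi_{\alpha},f\rangle=-\langle\Theta+\overline{\Theta},e^{-i\alpha\theta}F\rangle$. Now the pairings $\langle\delta^{(j+1)}_{\pm\pi/2},e^{-i\alpha\theta}F\rangle$ with $j\le n-2$ are all well defined and vanish because $F$ (not just $f$) has the required endpoint vanishing, so $\langle\Psi_{\alpha},f\rangle=0$ for mean--zero $f$ and the general case follows by subtracting off a fixed bump. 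This antiderivative device is what makes the paper's argument clean: it transfers the regularity burden from $\Psi_{\alpha}$ to $F$, where it is available for free, and removes exactly the obstacle you identified.
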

\begin{proof} First we consider $f\in\mathcal{X}$ such that $\int\limits_{-{\pi\over 2}}^{\pi\over 2} f(t) dt =0$ and we define $F(\theta)=\int\limits_{-{\pi\over 2}}^{\theta} f(t) dt$. It is easy to see that $F\in\mathcal{X}$ and $F'=f$. Because of the integration by parts formula we have that
\begin{align*}
<\Psi_{\alpha},f> =&  <\Psi_{\alpha},F'>  = \int\limits_{-{\pi\over 2}}^{\pi\over 2} \sum\limits_{k\ge 0} \alpha_{k} \left( {{e^{i(2k+n-\alpha)\theta}\over{2k+n-\alpha}}} + {{e^{-i(2k+n+\alpha)\theta}\over{2k+n+\alpha}}} \right) F'(\theta)d\theta = \\
 =& -<\Theta,e^{-i\alpha\theta}F>-<\overline{\Theta},e^{-i\alpha\theta}F>,
\end{align*}
where $\overline{\Theta}=\sum\limits_{k\ge 0} \binom{k+n-1}{k} (-1)^{k}e^{-i(2k+n)\theta}$. So if $n$ is even, from (\ref{ThetaPAR}) we get that $$
	<\Psi_{\alpha},f> = - \sum\limits_{j=0}^{n-2} c_{j} <\delta^{(j+1)}_{\pi\over 2}+\delta^{(j+1)}_{-\pi\over 2},e^{-i\alpha\theta}F>  - \sum\limits_{j=0}^{n-2} \overline{c_{j}}  <\overline{\delta^{(j+1)}_{\pi\over 2}}+\overline{\delta^{(j+1)}_{-\pi\over 2}},e^{-i\alpha\theta}F>,$$
	and because $<\delta_{\pm {\pi\over 2}}^{(j+1)}, e^{-i\alpha\theta}F>=0$ we conclude that $<\Psi_{\alpha},f> = 0$. If $n$ is odd we use (\ref{ThetaIMPAR}) to conclude that $<\Psi_{\alpha},f> = 0$.
\par For the general case of any $f\in\mathcal{X}$ we consider $h\in\mathcal{X}$ such that $\int_{-{\pi\over 2}}^{\pi\over 2} h(t)dt=1$ and define $g(\theta)=f(\theta)-\left( \int_{-{\pi\over 2}}^{\pi\over 2} f(t) dt \right) h(\theta)$. So we can apply the above result to $g$ and get that $<\Psi_{\alpha},g>=0$. Then $<\Psi_{\alpha},f>=<\Psi_{\alpha},g>+<\Psi_{\alpha},h><1,f>=<\Psi_{\alpha},h><1,f>$. Let $C_{\alpha}=<\Psi_{\alpha},h>$.

\par In order to compute $C_{\alpha}$, consider $g\in\mathcal{X}$ such that $\mbox{ supp}(g) \subset (-{\pi\over 4},{\pi\over 4})$, $\int\limits_{-\pi\over 4}^{\pi\over 4} g(t) dt =1$ and $g\ge 0$ we have that $$<e^{i\alpha\theta}\Psi_{\alpha},g> = C_{\alpha}  \int\limits_{-\pi\over 2}^{\pi\over 2} e^{i\alpha\theta} g(\theta) d\theta,$$ and also that $$<e^{i\alpha\theta}\Psi_{\alpha},g> = \lim\limits_{r\to 1^{-}} \left( \int\limits_{-{\pi\over 2}}^{\pi\over 2} (\Psi_{r,\alpha}(\theta)-\Psi_{r,\alpha}(0))e^{i\alpha\theta}g(\theta)d\theta + \Psi_{r,\alpha}(0) \int\limits_{-{\pi\over 2}}^{\pi\over 2} e^{i\alpha\theta}g(\theta)d\theta \right).$$ From lemmas 1 and 2 we get that $$C_{\alpha}=\lim\limits_{r\to 1^{-}} \Psi_{r,\alpha}(0)$$ and also that $C_{\alpha}$ is an analytic function of $\alpha$. Given that $\Psi_{0,\alpha}(0)=0$ we can write $$C_{\alpha}=\lim\limits_{r\to 1^{-}} \Psi_{r,\alpha}(0) = \Psi_{1,\alpha}(0)-\Psi_{0,\alpha}(0) = \int\limits_{0}^{1} w'_{\alpha}(s) ds,$$ where 
$$w_{\alpha}(r)  =  \Psi_{r,\alpha}(0)  = r^{-\alpha} \sum\limits_{k\ge 0} \alpha_{k} {{r^{2k+n}\over{2k+n-\alpha}}} + r^{\alpha} \sum\limits_{k\ge 0} \alpha_{k} {{r^{2k+n}\over{2k+n+\alpha}}}.$$
Applying (\ref{GI}) with $\theta=0$ we obtain $w_{\alpha}'(r)= (r^{-\alpha-1}+r^{\alpha-1}) \sum\limits_{k\ge 0} \alpha_{k}r^{2k+n} = (r^{-\alpha-1}+r^{\alpha-1}) \left( {r\over {1+r^{2}}} \right)^{n}$, and we can solve the integral for $\mathfrak{Re}(n+\alpha)>0$, $\mathfrak{Re}(n-\alpha)>0$, getting
\begin{equation}
\label{C0}
C_{\alpha}=B\left( {{n+\alpha}\over 2} , {{n-\alpha}\over 2} \right)={{\Gamma\left({{n+\alpha}\over 2}\right) \Gamma\left({{n-\alpha}\over 2}\right)}\over{(n-1)!}},
\end{equation}
where $B$ is the \textit{Beta function} and $\Gamma$ is the \textit{Gamma function}. By lemma 2.3, (\ref{C0}) holds for the other range of $\alpha$, by analytic continuation. In a completely analogous way we get that $\widetilde{C_{\alpha}}=(-1)^{n} e^{-i\alpha\pi} C_{\alpha}$.
\end{proof}

\par Let us now define
\begin{equation}
\label{K1f}
K_{1f}(\rho,\theta)={1\over \rho^{n-1}} \mbox{sgn}(\cos\theta) G_{f}(\rho\cos\theta, -{\rho\over 4}\sin\theta),
\end{equation}
for $\theta\in\left[-{\pi\over 2},{\pi\over 2}\right]$, $0<\rho<\infty$, where $G_{f}$ is the function defined in (\ref{Gf}); and
\begin{equation}
\label{K2f}
K_{2f}(\rho,\theta)={1\over \rho^{n-1}} \mbox{sgn}(\cos\theta) G_{f}(\rho\cos\theta, {\rho\over 4}\sin\theta),
\end{equation}
for $\theta\in\left[{\pi\over 2},{3\over 2}\pi\right]$, $0<\rho<\infty$.
\par It is easy to check that $K_{1f}(\rho,.)\in\mathcal{X}$. Recall that we changed variables according to $\tau-4it=\rho e^{i\theta}$. Since $Nf\in\mathcal{H}_{n}$, there exists a positive constant $c$ such that 
$$\sup\limits_{\tau\neq 0, t\in\RR} |(\tau^{2}+16t^{2})Nf(\tau,t)|\le c,$$ that is $$\left| Nf(\rho\cos\theta,-{\rho\over 4}\sin{\theta}) \right|\le {c\over\rho^{2}}.$$ Also since $Nf(0,.)\in\mathcal{S}(\RR)$, there exists a positive constant $c_{N}$ such that for $t\in\RR$, $$\left| t^{N} \sum\limits_{j=0}^{n-2} {{\partial^{j}}\over{\partial\tau^{j}}} Nf(0,t) {{\tau^{j}}\over{j!}} \right| \le c_{N} |\tau|^{n-2}.$$ Thus, for $N\in\NN$ there exists $c_{N}$ such that 
\begin{equation}
\label{cotaK1f}
|K_{1f}(\rho,\theta)|\le {a\over{\rho^{n+1}}} + {b\over{\rho^{N+1}}} {{|\cos\theta|^{n-2}}\over{|\sin\theta|^{N}}}.
\end{equation}
Analogous observations are also true for $K_{2f}$. 
\begin{prop} Let $C_{\alpha}$ and $\widetilde{C_{\alpha}}$ be the constans obtained in (\ref{C0}). Let $K_{1f}$ and $K_{2f}$ be as defined in (\ref{K1f}) and (\ref{K2f}) and $\alpha_{k}=\binom{k+n-1}{k} (-1)^{k}$. Then
\begin{align*}
& \lim\limits_{r\to 1^{-}} \int\limits_{0}^{\infty}  \int\limits_{-{\pi\over 2}}^{\pi\over 2} e^{i\alpha\theta} \sum\limits_{k\ge 0} \alpha_{k} \left( {{ r^{2k+n-\alpha}e^{i(2k+n-\alpha)\theta}} \over {2k+n-\alpha}}  + {{ r^{2k+n+\alpha}e^{-i(2k+n+\alpha)\theta}}  \over {2k+n+\alpha}} 
 \right) K_{1f}(\rho,\theta)d\theta d\rho =  \\
 & \qquad = 4^{n-1}(n-1)!C_{\alpha} \int\limits_{\RR} \int\limits_{\tau>0} {1\over {(\tau-4it)^{{n-\alpha}\over 2}}} {1\over {(\tau+4it)^{{n+\alpha}\over 2}}} \mbox{sgn}(\tau)G_{f}(\tau,t)d\tau dt,
  \end{align*}
and
\begin{align*}
& \lim\limits_{r\to 1^{-}} \int\limits_{0}^{\infty}  \int\limits_{\pi\over 2}^{{3\over 2}\pi} e^{i\alpha\theta} \sum\limits_{k\ge 0} \alpha_{k} \left( {{ r^{2k+n-\alpha}e^{i(2k+n-\alpha)\theta }} \over {2k+n-\alpha}}  + {{ r^{2k+n+\alpha}e^{-i(2k+n+\alpha)\theta}}  \over {2k+n+\alpha}} 
 \right) K_{2f}(\rho,\theta)d\theta d\rho =  \\ 
 & \qquad = 4^{n-1}(n-1)!\widetilde{C_{\alpha}} \int\limits_{\RR} \int\limits_{\tau<0} {1\over {(\tau-4it)^{{n-\alpha}\over 2}}} {1\over {(\tau+4it)^{{n+\alpha}\over 2}}} \mbox{sgn}(\tau)G_{f}(\tau,t)d\tau dt.
  \end{align*}
\end{prop}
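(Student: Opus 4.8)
The plan is to read the inner $\theta$-integral as a dual pairing, pass to the limit $r\to1^{-}$ inside it via the previous proposition, then commute that limit with the outer $\rho$-integral by dominated convergence, and finally undo the polar substitution $\tau-4it=\rho e^{i\theta}$. For fixed $\rho>0$ and $0<r<1$ the series \eqref{Psir} converges absolutely and uniformly, and by its very form
\[
\int_{-\pi/2}^{\pi/2} e^{i\alpha\theta}\,\Psi_{r,\alpha}(\theta)\,K_{1f}(\rho,\theta)\,d\theta=\big\langle \Psi_{r,\alpha},\,e^{i\alpha\cdot}K_{1f}(\rho,\cdot)\big\rangle,
\]
and similarly $\int_{\pi/2}^{3\pi/2} e^{i\alpha\theta}\Psi_{r,\alpha}(\theta)K_{2f}(\rho,\theta)\,d\theta=\langle\Psi_{r,\alpha},e^{i\alpha\cdot}K_{2f}(\rho,\cdot)\rangle$. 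Because $K_{1f}(\rho,\cdot)\in\mathcal{X}$, $K_{2f}(\rho,\cdot)\in\mathcal{Y}$, and both spaces are stable under multiplication by $e^{i\alpha\theta}$, the test functions $e^{i\alpha\cdot}K_{1f}(\rho,\cdot)$ and $e^{i\alpha\cdot}K_{2f}(\rho,\cdot)$ lie in $\mathcal{X}$ and $\mathcal{Y}$; and since $\Psi_{r,\alpha}\to\Psi_\alpha$ in $\mathcal{X}'$ (resp.\ $\mathcal{Y}'$) as $r\to1^{-}$, the previous proposition and $\langle 1,e^{i\alpha\cdot}K_{1f}(\rho,\cdot)\rangle=\int_{-\pi/2}^{\pi/2}e^{i\alpha\theta}K_{1f}(\rho,\theta)\,d\theta$ give, for each fixed $\rho$,
\[
\lim_{r\to1^{-}}\big\langle \Psi_{r,\alpha},e^{i\alpha\cdot}K_{1f}(\rho,\cdot)\big\rangle=C_\alpha\int_{-\pi/2}^{\pi/2}e^{i\alpha\theta}K_{1f}(\rho,\theta)\,d\theta,
\]
and the analogous identity with $\widetilde{C_\alpha}$ and $K_{2f}$ over $[\pi/2,3\pi/2]$.

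To commute this limit with $\int_0^{\infty}d\rho$ I would build an $r$-uniform integrable majorant. Rerunning the computation in \eqref{cotaPsi} with $0<r<1$, where $r^{2k+n}\le1$ (and $r^{\pm\alpha}$ is bounded for $r\in[\tfrac12,1)$), gives $|\langle\Psi_{r,\alpha},g\rangle|\le c\,\|g^{(n-1)}\|_{L^2}$ with $c$ independent of such $r$. Applying this with $g=e^{i\alpha\cdot}K_{1f}(\rho,\cdot)$ and using \eqref{cotaK1f} together with its analogues for the $\theta$-derivatives up to order $n-1$ — which come from the same ingredients as \eqref{cotaK1f}: the rapid decay of $Nf(0,\cdot)$, the bound $\sup_{\tau\ne0,\,t}|(\tau^2+16t^2)Nf(\tau,t)|<\infty$, and the vanishing $G_f(\tau,t)=O(|\tau|^{n-1})$ as $\tau\to0$ visible from \eqref{Gf} — one obtains $\psi\in L^1(0,\infty)$ with $|\langle\Psi_{r,\alpha},e^{i\alpha\cdot}K_{1f}(\rho,\cdot)\rangle|\le\psi(\rho)$ for all $r\in[\tfrac12,1)$; integrability of $\psi$ near $\rho=\infty$ uses the $\rho^{-(n+1)}$ and $\rho^{-(N+1)}$ terms in \eqref{cotaK1f} with $N$ large (after isolating a neighbourhood of $\theta=0$), and near $\rho=0$ the boundedness of $\rho^{-(n-1)}G_f(\rho\cos\theta,-\tfrac{\rho}{4}\sin\theta)$. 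The same argument applies to $K_{2f}$, so dominated convergence yields
\[
\lim_{r\to1^{-}}\int_0^{\infty}\!\!\int_{-\pi/2}^{\pi/2}e^{i\alpha\theta}\Psi_{r,\alpha}(\theta)K_{1f}(\rho,\theta)\,d\theta\,d\rho=C_\alpha\int_0^{\infty}\!\!\int_{-\pi/2}^{\pi/2}e^{i\alpha\theta}K_{1f}(\rho,\theta)\,d\theta\,d\rho,
\]
and likewise with $\widetilde{C_\alpha}$, $K_{2f}$ and $[\pi/2,3\pi/2]$.

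It remains to undo the substitution $\tau-4it=\rho e^{i\theta}$: there $\tau=\rho\cos\theta$, $t=-\tfrac{\rho}{4}\sin\theta$, $d\tau\,dt=\tfrac{\rho}{4}\,d\rho\,d\theta$, the set $\{\tau>0\}$ is $\{|\theta|<\pi/2\}$, $\operatorname{sgn}(\tau)=\operatorname{sgn}(\cos\theta)$, and, since $\tau\pm4it$ lie in the open right half-plane, $(\tau-4it)^{(n-\alpha)/2}(\tau+4it)^{(n+\alpha)/2}=\rho^{n}e^{-i\alpha\theta}$. Inserting this and the definition \eqref{K1f} of $K_{1f}$ and collecting the constants turns $C_\alpha\int_0^{\infty}\int_{-\pi/2}^{\pi/2}e^{i\alpha\theta}K_{1f}(\rho,\theta)\,d\theta\,d\rho$ into the claimed right-hand side $4^{n-1}(n-1)!\,C_\alpha\int_{\RR}\int_{\tau>0}(\tau-4it)^{-(n-\alpha)/2}(\tau+4it)^{-(n+\alpha)/2}\operatorname{sgn}(\tau)G_f(\tau,t)\,d\tau\,dt$. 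The second identity is obtained identically over $[\pi/2,3\pi/2]$, where $\{\tau<0\}=\{\cos\theta<0\}$ and $K_{2f}$ carries $+\tfrac{\rho}{4}\sin\theta$ — precisely the orientation already produced by the changes $\theta\mapsto-\theta$ and $\theta\mapsto\theta+2\pi$ used to bring $\langle\Phi_{11},f\rangle$ to its present form — with $\widetilde{C_\alpha}$ in place of $C_\alpha$. The only non-formal point is the dominated-convergence step, i.e.\ producing the $r$-uniform majorant $\psi$: that is where \eqref{cotaPsi} (now for $r<1$) must be combined with \eqref{cotaK1f} and its derivative analogues, with care both near $\rho=\infty$ (choose $N$ large) and near $\rho=0$ (use the $(n-1)$-fold vanishing of $G_f$).
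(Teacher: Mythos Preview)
The dominated-convergence step has a real gap: the majorant $\psi(\rho)=c\,\|(e^{i\alpha\cdot}K_{1f}(\rho,\cdot))^{(n-1)}\|_{L^{2}}$ that comes from the $r$-uniform version of \eqref{cotaPsi} is \emph{not} in $L^{1}(1,\infty)$. The trouble is the Taylor polynomial subtracted in $G_{f}$. Already for $n=2$ the polynomial part of $K_{1f}$ is $\rho^{-1}Nf(0,-\tfrac{\rho}{4}\sin\theta)$, whose first $\theta$-derivative equals $-\tfrac{1}{4}\cos\theta\,\partial_{t}Nf(0,-\tfrac{\rho}{4}\sin\theta)$; the substitution $u=-\tfrac{\rho}{4}\sin\theta$ shows that its $L^{2}(d\theta)$ norm behaves like $C\rho^{-1/2}$ as $\rho\to\infty$, which is not integrable on $(1,\infty)$. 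For general $n$ the leading contribution to $\partial_{\theta}^{\,n-1}$ of the polynomial part gives $L^{2}$ norm of order $\rho^{\,n-5/2}$, which is worse. The $Nf$-part of $K_{1f}$ does decay well, but there is no cancellation with the polynomial part near $\theta=0$ for large $\rho$. Your parenthetical ``after isolating a neighbourhood of $\theta=0$'' is precisely where the argument breaks: once you cut the test function at $|\theta|=\delta$, the two pieces no longer lie in $\mathcal{X}$ (they fail to vanish to order $n-2$ at the cut), so the $\mathcal{X}'$ bound \eqref{cotaPsi} is unavailable on each piece.

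The paper's proof avoids this by a different decomposition. It splits both in $\rho$ (at $1$) and in $\theta$ (at $|\theta|=\delta<\tfrac{\pi}{4}$). On the strip $|\theta|<\delta$ it does \emph{not} invoke the $\mathcal{X}'$ estimate: Lemma~2.4 furnishes a pointwise bound $|\Psi_{r,\alpha}(\theta)-\Psi_{r,\alpha}(0)|\le C(r)\,|\theta|$ with $C(r)\to 0$, and this extra factor of $|\theta|$ is exactly what, combined with \eqref{cotaK1f} (take $N=1$ for the piece $\Psi_{r,\alpha}(\theta)-\Psi_{r,\alpha}(0)$ and $N=\tfrac{1}{2}$ for the piece $\Psi_{r,\alpha}(0)-C_{\alpha}$), makes the double integral over $\{\rho>1,\ |\theta|<\delta\}$ finite. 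On $\delta\le|\theta|\le\tfrac{\pi}{2}$ one integrates in $\rho$ \emph{first}: since $|\sin\theta|\ge\sin\delta>0$, the polynomial part decays rapidly in $\rho$ and $K_{1f}^{\ast}(\theta)=\int_{1}^{\infty}K_{1f}(\rho,\theta)\,d\rho$ extends to an element of $\mathcal{X}$; only then is the weak convergence $\Psi_{r,\alpha}\to C_{\alpha}$ in $\mathcal{X}'$ applied. The pointwise control of Lemma~2.4 near $\theta=0$ is therefore not an optional refinement but the mechanism that compensates the non-integrable $\rho$-behaviour of the polynomial part; a global $\rho$-wise majorant via \eqref{cotaPsi} alone cannot do this.
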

\begin{proof} The proof follows the same lines that Proposition 4.2 of \cite{[G-S2]}. We only sketch it for completeness.
\par Taking polar coordinates $\tau-4it=\rho e^{i\theta}$ we only need to show that
\begin{equation}
\label{integral}
\lim\limits_{r\to 1^{-}} \int\limits_{0}^{\infty} <\Psi_{r,\alpha},e^{i\alpha\theta}K_{1f}(\rho,\theta)> d\rho = \int\limits_{0}^{\infty} <C_{\alpha},e^{i\alpha\theta}K_{1f}(\rho,\theta)> d\rho.
\end{equation}
\par In order to do this we split the integral for $0<\rho<1$ and $1<\rho<\infty$.
\par We consider first the case $1<\rho<\infty$. For $|\theta|\le \delta <{\pi\over 4}$, set $ I= \int\limits_{1}^{\infty} \int\limits_{|\theta|<\delta} e^{i\alpha\theta}(\Psi_{r,\alpha}(\theta)-\Psi_{r,\alpha}(0))K_{1f}(\rho,\theta) d\theta d\rho$ and $II= \int\limits_{1}^{\infty} \int\limits_{|\theta|<\delta} e^{i\alpha\theta}(\Psi_{r,\alpha}(0)-C_{\alpha})K_{1f}(\rho,\theta) d\theta d\rho$. We bound $I$ close to $0$ by applying Lemma 2.4 and taking $N=1$ in (\ref{cotaK1f}). For $II$ we just take $N={1\over 2}$ in (\ref{cotaK1f}). To analize the case $\delta\le|\theta|\le {\pi\over 2}$, we observe that the function $K_{1f}^{\ast}(\theta)=\int\limits_{1}^{\infty} K_{1f}(\rho,\theta) d\rho$ defined for $\theta\in[-{\pi\over2}, -\delta]\cup[\delta,{\pi\over 2}]$ can be extended to an element of $\mathcal{X}$ that we still denote by $K_{1f}^{\ast}$. Then
\begin{align*}
&\int\limits_{1}^{\infty} \int\limits_{\delta<|\theta|<{\pi\over 2}} e^{i\alpha\theta} (\Psi_{r,\alpha}(\theta)-C_{\alpha}) K_{1f}(\rho,\theta) d\theta d\rho = \\
& \qquad = \int\limits_{-{\pi\over 2}}^{\pi\over 2} e^{i\alpha\theta} (\Psi_{r,\alpha}(\theta)-C_{\alpha}) K_{1f}^{*}(\theta) d\theta - \int\limits_{|\theta|<\delta} e^{i\alpha\theta} (\Psi_{r,\alpha}(\theta)-C_{\alpha}) K_{1f}^{*}(\theta) d\theta.\end{align*}
The first term converges to zero as $r\to 1^{-}$ since $\Psi_{r,\alpha}\to C_{\alpha}$ as $r\to 1^{-}$ in $\mathcal{X}'$. For the second term we argue as above.
\par Finally, for the case $0<\rho<1$ we apply the same arguments to the function $K_{1f}^{\ast\ast}(\theta)=\int\limits_{0}^{1} K_{1f}(\rho,\theta) d\rho.$
\end{proof}

\begin{cor} $<\Phi_{11},f>$ is well defined for $f\in\mathcal{S}(\mathbb{H}_{n})$, and \begin{align*}
&<\Phi_{11},f>= 4^{n-1}(n-1)!C_{\alpha} \int\limits_{\RR} \int\limits_{\tau>0} {1\over {(\tau-4it)^{{n-\alpha}\over 2}}} {1\over {(\tau+4it)^{{n+\alpha}\over 2}}} \mbox{sgn}(\tau)G_{f}(\tau,t)d\tau dt + \\
& \qquad  +  4^{n-1}(n-1)!\widetilde{C_{\alpha}} \int\limits_{\RR} \int\limits_{\tau<0} {1\over {(\tau-4it)^{{n-\alpha}\over 2}}} {1\over {(\tau+4it)^{{n+\alpha}\over 2}}} \mbox{sgn}(\tau)G_{f}(\tau,t)d\tau dt.\end{align*}
\end{cor}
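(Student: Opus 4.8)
The plan is to deduce the statement directly from Proposition 2.6, applied to the expression for $<\Phi_{11},f>$ already derived above. Recall where we stand: after using Proposition 2.2 to perform the limit $\epsilon\to 0^{+}$, to pass to polar coordinates $\tau-4it=\rho e^{i\theta}$, and to split over the half-planes $\{\tau>0\}$ and $\{\tau<0\}$ (interchanging the order of integration, again by Proposition 2.2), we obtained
$$<\Phi_{11},f>=\beta_{n}\lim_{r\to 1^{-}}\int_{0}^{\infty}\int_{-\pi/2}^{\pi/2} e^{i\alpha\theta}\Psi_{r,\alpha}(\theta)\,\tfrac{1}{\rho^{n-1}}\mbox{sgn}(\cos\theta)G_{f}\big(\rho\cos\theta,-\tfrac{\rho}{4}\sin\theta\big)\,d\theta\,d\rho+(\cdots),$$
where $(\cdots)$ is the analogous term, with constant $\tfrac{(-1)^{n}}{4}\beta_{n}$, with $\int_{\pi/2}^{3\pi/2}$, and with $G_{f}(\rho\cos\theta,\tfrac{\rho}{4}\sin\theta)$, and where $\Psi_{r,\alpha}$ is the series in (\ref{Psir}). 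By the definitions (\ref{K1f}) and (\ref{K2f}) the two integrands are exactly $e^{i\alpha\theta}\Psi_{r,\alpha}(\theta)K_{1f}(\rho,\theta)$ and $e^{i\alpha\theta}\Psi_{r,\alpha}(\theta)K_{2f}(\rho,\theta)$, so the two $(r\to1^{-})$-limits are precisely the left-hand sides of the two identities in Proposition 2.6.

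First I would record that for each fixed $\rho>0$ one has $K_{1f}(\rho,\cdot)\in\mathcal{X}$ and $K_{2f}(\rho,\cdot)\in\mathcal{Y}$: by (\ref{Gf}) the function $G_{f}(\cdot,t)$ and its $\tau$-derivatives up to order $n-2$ vanish at $\tau=0$, and combined with the factor $\mbox{sgn}(\cos\theta)$ this forces $K_{if}(\rho,\cdot)$ and its derivatives up to order $n-2$ to vanish at $\pm\tfrac{\pi}{2}$, while the size estimate is the one already displayed in (\ref{cotaK1f}); this is the content of the observation ``it is easy to check that $K_{1f}(\rho,.)\in\mathcal{X}$'' made above. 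These same estimates guarantee convergence of the auxiliary $\rho$-integrals $K_{1f}^{*}$, $K_{1f}^{**}$ (and their $K_{2f}$ counterparts) used inside the proof of Proposition 2.6, so that proposition applies to $f\in\mathcal{S}(\mathbb{H}_{n})$; in particular it asserts that the two limits above exist, which is exactly the well-definedness of $<\Phi_{11},f>$. Substituting the two values furnished by Proposition 2.6, collecting the numerical constants in front, and recalling that the constant naturally attached to the $\{\tau<0\}$ piece is $\widetilde{C_{\alpha}}=(-1)^{n} e^{-i\alpha\pi}C_{\alpha}$ (Proposition 2.5 and (\ref{C0})), one obtains the displayed formula for $<\Phi_{11},f>$.

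I do not expect any genuine difficulty here: all the analytic content --- the weak-$*$ convergence $\Psi_{r,\alpha}\to C_{\alpha}$ in $\mathcal{X}'$ and $\Psi_{r,\alpha}\to\widetilde{C_{\alpha}}$ in $\mathcal{Y}'$, the splitting of the $\rho$-integral into $0<\rho<1$ and $\rho>1$, and the continuity estimates of Lemmas 2.3 and 2.4 --- is already carried out inside Proposition 2.6. What remains is the bookkeeping of the constants $\beta_{n}$, $\tfrac{(-1)^{n}}{4}\beta_{n}$, $4^{n-1}(n-1)!$, $C_{\alpha}$, $\widetilde{C_{\alpha}}$, together with the elementary verification that $K_{1f}(\rho,\cdot)\in\mathcal{X}$ and $K_{2f}(\rho,\cdot)\in\mathcal{Y}$. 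As a side remark, once $<\Phi_{11},f>$ is known to be well defined it follows that $<\Phi_{12},f>$ is too, since $\Phi_{12}=\Phi_{1}-\Phi_{11}$ with $\Phi_{1}=\Phi_{\alpha}-\Phi_{2}$, where $\Phi_{\alpha}$ is a tempered distribution by Theorem 2.1 and $\Phi_{2}$ is a finite sum of distributions.
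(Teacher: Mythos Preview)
Your proposal is correct and follows essentially the same route as the paper: the corollary is stated as an immediate consequence of Proposition 2.6, obtained by substituting the two limit values into the polar-coordinate expression for $<\Phi_{11},f>$ already derived, together with the verification that $K_{1f}(\rho,\cdot)\in\mathcal{X}$ and $K_{2f}(\rho,\cdot)\in\mathcal{Y}$. Your side remark that well-definedness of $<\Phi_{12},f>$ then follows is also exactly what the paper observes right after the corollary.
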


\par From the corollary we also get that $<\Phi_{12},f>$ is well defined. In order to explicitly compute it we define for $0\le l \le n-2$, $\epsilon >0$ and $f\in\mathcal{S}(\mathbb{H}_{n})$
\begin{equation}
\label{d-elf}
d^{-}_{\epsilon,l,f}=\int\limits_{0}^{\infty} \int\limits_{-\infty}^{\infty} e^{-\epsilon |\lambda|} e^{-i\lambda t} |\lambda|^{n-l-2}{ {\partial^{l}}\over{\partial \tau^{l}} } Nf(0,t) dt d\lambda, \quad \mbox{ and }
\end{equation}
\begin{equation}
\label{d+elf}
d^{+}_{\epsilon,l,f}=\int\limits_{0}^{\infty} \int\limits_{-\infty}^{\infty} e^{-\epsilon |\lambda|} e^{i\lambda t} |\lambda|^{n-l-2}{ {\partial^{l}}\over{\partial \tau^{l}} } Nf(0,t) dt d\lambda.
\end{equation}
Then we can write (\ref{Phi12}) as
$$<\Phi_{12},f> = \lim\limits_{r\to 1^{-}} \lim\limits_{\epsilon \to 0^{+}} \sum\limits_{k\ge 0} \sum\limits_{\stackrel{l=0}{l \mbox{ odd}}}^{n-2} 2^{l+2} (a_{kl}+b_{kl})  \left[ { {r^{2k+n-\alpha}} \over {2k+n-\alpha}} d^{-}_{\epsilon,l,f} + 
 { {r^{2k+n+\alpha}} \over {2k+n+\alpha}} d^{+}_{\epsilon,l,f}\right].$$
\par From Lemma 4.4 in \cite{[G-S2]} we deduce that
$$a_{kl}+b_{kl}= (-1)^{k} \sum\limits_{j=1}^{l+1} {1\over{2^{n-l-j-1}}} \binom{n-j-1}{l-j+1} \binom{j+k-1}{k}.$$
Also we have the following
\begin{lem} If $0\le l \le n-2$, $\epsilon >0$ and $f\in\mathcal{S}(\mathbb{H}_{n})$, then
$$\lim\limits_{\epsilon\to 0^{+}} d^{-}_{\epsilon,l,f} = {1\over {i^{n-l-2}}} < {\pi\over 2}\delta-i (vp)\left({1\over\lambda}\right), {{\partial^{n-2}}\over{\partial\lambda^{n-l-2}\partial\tau^{l}}} Nf(0,.)>
\quad \mbox{ and }$$
$$\lim\limits_{\epsilon\to 0^{+}} d^{+}_{\epsilon,l,f} = i^{n-l-2} < {\pi\over 2}\delta+i (vp)\left({1\over\lambda}\right), {{\partial^{n-2}}\over{\partial\lambda^{n-l-2}\partial\tau^{l}}} Nf(0,.)>.$$
\end{lem}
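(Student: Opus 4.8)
The plan is to unwind the definitions until each $d^{\pm}_{\epsilon,l,f}$ is exhibited as the pairing of a \emph{fixed} Schwartz function with the kernel $(\epsilon\pm i\lambda)^{-1}$, and then to pass to the limit via the classical distributional boundary value of that kernel. Fix $l$ with $0\le l\le n-2$ and put $m=n-l-2\ge 0$. Write $g$ for the function $t\mapsto\frac{\partial^{l}}{\partial\tau^{l}}Nf(0,t)$; since $Nf(0,\cdot)\in\mathcal{S}(\RR)$ and, by the description of $N$ through the space $\mathcal{H}_{n}$, differentiating in $\tau$ and evaluating at $\tau=0$ keeps one in the Schwartz class, we have $g\in\mathcal{S}(\RR)$, and hence $g^{(m)}=\frac{\partial^{n-2}}{\partial t^{n-l-2}\partial\tau^{l}}Nf(0,\cdot)\in\mathcal{S}(\RR)$ as well.

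First I would perform the inner ($t$) integration in $d^{-}_{\epsilon,l,f}$. Since $\lambda>0$ we have $|\lambda|=\lambda$, and the $t$-integral is the rapidly decreasing function $\widehat{g}(\lambda)=\int_{\RR}g(t)e^{-i\lambda t}\,dt$. Iterating $\widehat{g'}(\lambda)=i\lambda\,\widehat{g}(\lambda)$ gives $\widehat{g^{(m)}}(\lambda)=(i\lambda)^{m}\widehat{g}(\lambda)$, hence $\lambda^{m}\widehat{g}(\lambda)=\frac{1}{i^{m}}\widehat{g^{(m)}}(\lambda)$, so $d^{-}_{\epsilon,l,f}=\frac{1}{i^{m}}\int_{0}^{\infty}e^{-\epsilon\lambda}\widehat{g^{(m)}}(\lambda)\,d\lambda$. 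For each fixed $\epsilon>0$ the function $(\lambda,t)\mapsto e^{-\epsilon\lambda}g^{(m)}(t)$ is integrable on $(0,\infty)\times\RR$, so Fubini lets one undo the Fourier transform and compute the elementary $\lambda$-integral:
$$ d^{-}_{\epsilon,l,f}=\frac{1}{i^{m}}\int_{\RR}g^{(m)}(t)\Big(\int_{0}^{\infty}e^{-(\epsilon+it)\lambda}\,d\lambda\Big)\,dt=\frac{1}{i^{m}}\int_{\RR}\frac{g^{(m)}(t)}{\epsilon+it}\,dt. $$
It then remains to let $\epsilon\to0^{+}$, which is done via the classical fact that $(\epsilon+i\lambda)^{-1}\to\frac{\pi}{2}\delta-i\,(vp)(1/\lambda)$ as $\epsilon\to0^{+}$, weakly and in particular against the fixed Schwartz function $g^{(m)}$. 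Substituting and renaming the dummy variable $t$ as $\lambda$ yields the asserted formula for $\lim_{\epsilon\to0^{+}}d^{-}_{\epsilon,l,f}$. The identity for $d^{+}_{\epsilon,l,f}$ follows from the very same computation with $e^{i\lambda t}$ replacing $e^{-i\lambda t}$: the derivative transfer now produces the factor $i^{m}$, the inner $\lambda$-integral is $(\epsilon-it)^{-1}$, and $(\epsilon-i\lambda)^{-1}\to\frac{\pi}{2}\delta+i\,(vp)(1/\lambda)$.

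The one point that deserves care is this last limit, namely that $\int_{\RR}g^{(m)}(t)(\epsilon+it)^{-1}\,dt$ really converges to the stated distributional pairing. I would argue in the standard way: split $(\epsilon+it)^{-1}=\epsilon(\epsilon^{2}+t^{2})^{-1}-it(\epsilon^{2}+t^{2})^{-1}$; the real part is a multiple of a standard approximate identity and contributes the $\delta$-term, while for the odd imaginary part one writes $g^{(m)}(t)=\big(g^{(m)}(t)-g^{(m)}(0)\chi(t)\big)+g^{(m)}(0)\chi(t)$ with a smooth even cutoff $\chi\equiv1$ near $0$, notes that the $g^{(m)}(0)\chi$ piece pairs to $0$ against the odd function $t(\epsilon^{2}+t^{2})^{-1}$, and applies dominated convergence to the remainder, whose integrand is dominated uniformly in $\epsilon$ by an $L^{1}$ function and converges pointwise to $\big(g^{(m)}(t)-g^{(m)}(0)\chi(t)\big)/t$, thereby recovering the principal-value term. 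Everything else, the Fubini interchange and the integration by parts behind the derivative transfer, is routine once one knows $g^{(m)}\in\mathcal{S}(\RR)$, so this boundary-value step is the only genuine obstacle.
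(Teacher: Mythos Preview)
Your proof is correct and follows essentially the same route as the paper's: both identify the inner $t$-integral as $\widehat{h}(\lambda)$ for $h(t)=\partial_{\tau}^{l}Nf(0,t)$, convert the factor $\lambda^{n-l-2}$ into $n-l-2$ derivatives of $h$, rewrite $d^{-}_{\epsilon,l,f}$ as $i^{-(n-l-2)}\int_{\RR}(\epsilon+it)^{-1}h^{(n-l-2)}(t)\,dt$, and then invoke the distributional limit $(\epsilon+i\lambda)^{-1}\to\tfrac{\pi}{2}\delta-i\,(vp)(1/\lambda)$. Your treatment of the boundary-value step and of the $d^{+}$ case is in fact more detailed than the paper's, which simply asserts the limit and handles $d^{+}$ by the substitution $\lambda\leftrightarrow-\lambda$.
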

\begin{proof} Let us consider $g(\lambda)=e^{-\epsilon|\lambda|} |\lambda|^{n-l-2}$ and $h(t)={{\partial^{l}}\over{\partial \tau^{l}}} Nf(0,t)$, and observe that $\int_{-\infty}^{\infty} e^{-i\lambda t} h(t) dt = \hat{h}(\lambda)$. Then just by using properties of the Fourier transform we get that
$$d^{-}_{\epsilon,l,f}=\int\limits_{0}^{\infty}\int\limits_{-\infty}^{\infty} g(\lambda) e^{-i\lambda t} h(t) dt d\lambda = \int\limits_{0}^{\infty} g(\lambda) \hat{h}(\lambda) d\lambda =  {1\over {i^{n-l-2}}}\int\limits_{-\infty}^{\infty} {1\over {\epsilon+i\lambda}} h^{(n-l-2)}(\lambda) d\lambda
.$$
\par For each $\epsilon >0$, $1\over{\epsilon+i\lambda}$ is a distribution such that there exists $\lim\limits_{\epsilon\to 0^{+}} {1\over{\epsilon+i\lambda}}$ in $\mathcal{S}'(\RR)$. Moreover, it is easy to check that $$\lim\limits_{\epsilon\to 0^{+}} {1\over{\epsilon+i\lambda}}={\pi\over 2}\delta -i(vp)\left({1\over\lambda}\right).$$
\par Thus the desired equality follows. For $d^{+}_{\epsilon,l,f}$ we need to change variable according to $\lambda \longleftrightarrow -\lambda$ after considering the Fourier transform of $h$.
\end{proof}

\par We define for $j\in\NN$, $0<j<n-1$, the functions of $r$, with $0\le r <1$, by
$$ w_{j}^{-}(r)=\sum\limits_{k\ge 0} (-1)^{k} \binom{j+k-1}{k} {{r^{2k+n-\alpha}}\over{2k+n-\alpha}},$$	
$$ w_{j}^{+}(r)=\sum\limits_{k\ge 0} (-1)^{k} \binom{j+k-1}{k} {{r^{2k+n+\alpha}}\over{2k+n+\alpha}}.$$
\par We can see, in a complete analogous way as the computations made for $C_{\alpha}$ and $\widetilde{C_{\alpha}}$, that this functions are well defined and that
\begin{equation}\label{c-j} 
\begin{array}{rl}
c_{j}^{-}:=& \lim\limits_{r\to 1^{-}} w_{j}^{-}(r) = {1\over 2} B_{1\over 2} \left( {{n-\alpha}\over{2}}, j-{{n-\alpha}\over{2}} \right), \mbox{ and} \\
c_{j}^{+}:=& \lim\limits_{r\to 1^{-}} w_{j}^{+}(r) = {1\over 2} B_{1\over 2} \left( {{n+\alpha}\over{2}}, j-{{n+\alpha}\over{2}} \right),\end{array} \end{equation}
where $B_{1\over 2}$ is another special function called the \textit{incomplete Beta function}.
\par We now plug all of this definitions and results together to finally obtain an expression for $\Phi_{12}$:
\begin{align*}
& <\Phi_{12},f>  = 
\sum\limits_{\stackrel{l=0}{l \mbox{ \tiny{odd}}}}^{n-2} \sum\limits_{j=1}^{l+1}  {2^{2l-n+j+3}} \binom{n-j-1}{l-j+1}  \left[ \left({1\over i^{n-l-2}} c^{-}_{j} + i^{n-l-2} c^{+}_{j}\right) {\pi\over 2}  \right] \times \\
& \qquad \qquad \times  < \delta , {{\partial^{n-2}}\over{\partial\lambda^{n-l-2}\partial\tau^{l}}} Nf(0,.)> + \\
& \qquad + (-1) \sum\limits_{\stackrel{l=0}{l \mbox{ \tiny{odd}}}}^{n-2} \sum\limits_{j=1}^{l+1}  {2^{2l-n+j+3}} \binom{n-j-1}{l-j+1}  \left({1\over i^{n-l+1}} c^{-}_{j} + i^{n-l+1} c^{+}_{j} \right) \times \\
& \qquad \qquad \times <(vp)\left({1\over\lambda}\right) , {{\partial^{n-2}}\over{\partial\lambda^{n-l-2}\partial\tau^{l}}} Nf(0,.)>.
\end{align*}
\par All we need to do now is to use again lemma 2.8 to get an expression for $\Phi_{2}$. Thus, we have proved the following
\begin{thm} Let $C_{\alpha}$ and $\widetilde{C_{\alpha}}$ be the constans defined as in (\ref{C0}), and let $c^{-}_{j}$ and $c^{+}_{j}$ the constants defined as in (\ref{c-j}). Then 
\begin{align*}
& <\Phi,f> = 4^{n-1}(n-1)!C_{\alpha} \int\limits_{\RR} \int\limits_{\tau>0} {1\over {(\tau-4it)^{{n-\alpha}\over 2}}} {1\over {(\tau+4it)^{{n+\alpha}\over 2}}} \mbox{sgn}(\tau)G_{f}(\tau,t)d\tau dt + \\
& \qquad + 4^{n-1}(n-1)!\widetilde{C_{\alpha}} \int\limits_{\RR} \int\limits_{\tau<0} {1\over {(\tau-4it)^{{n-\alpha}\over 2}}} {1\over {(\tau+4it)^{{n+\alpha}\over 2}}} \mbox{sgn}(\tau)G_{f}(\tau,t)d\tau dt + \\
& \qquad + \sum\limits_{\stackrel{l=0}{l \mbox{ \tiny{odd}}}}^{n-2} \sum\limits_{j=1}^{l+1}  {2^{2l-n+j+3}} \binom{n-j-1}{l-j+1}  \left[ \left({1\over i^{n-l-2}} c^{-}_{j} + i^{n-l-2} c^{+}_{j}\right) {\pi\over 2}  \right] \times \\
& \qquad \qquad \times < \delta , {{\partial^{n-2}}\over{\partial\lambda^{n-l-2}\partial\tau^{l}}} Nf(0,.)> + \\
 & \qquad +(-1) \sum\limits_{\stackrel{l=0}{l \mbox{ \tiny{odd}}}}^{n-2} \sum\limits_{j=1}^{l+1}  {2^{2l-n+j+3}} \binom{n-j-1}{l-j+1}  \left({1\over i^{n-l+1}} c^{-}_{j} + i^{n-l+1} c^{+}_{j} \right) \times \\
 & \qquad \qquad \times <(vp)\left({1\over\lambda}\right) , {{\partial^{n-2}}\over{\partial\lambda^{n-l-2}\partial\tau^{l}}} Nf(0,.)> + \\
 & \qquad + \sum\limits_{k=0}^{n-2} \sum\limits_{l=0}^{n-2} c_{kl} \left[ {1\over{n-2k+\alpha-2}} i^{n-l-2} < {\pi\over 2}\delta+i (vp)\left({1\over\lambda}\right), {{\partial^{n-2}}\over{\partial\lambda^{n-l-2}\partial\tau^{l}}} Nf(0,.)> + \right. \\
&  \qquad \qquad \qquad \left. + {1\over{n-2k-\alpha-2}} {1\over {i^{n-l-2}}} < {\pi\over 2}\delta-i (vp)\left({1\over\lambda}\right), {{\partial^{n-2}}\over{\partial\lambda^{n-l-2}\partial\tau^{l}}} Nf(0,.)> \right],
\end{align*}
where $c_{kl}= \sum\limits_{\stackrel{1\le j\le n-2}{\tiny{j\ge n-k-2}}} 2^{2l-n-j} (-1)^{n-j} \binom{j}{l} \binom{k-l+1}{n-j-2}$.
\end{thm}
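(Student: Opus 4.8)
The plan is to assemble Theorem 2.11 purely by collecting and adding together the three pieces $\Phi_{11}$, $\Phi_{12}$ and $\Phi_2$ that have already been analyzed in the preceding development, so that the proof amounts to bookkeeping rather than new analysis. First I would invoke Corollary 2.7 verbatim to write down the contribution of $\Phi_{11}$: this supplies the two integral terms with the kernels $(\tau-4it)^{-(n-\alpha)/2}(\tau+4it)^{-(n+\alpha)/2}\,\mathrm{sgn}(\tau)G_f(\tau,t)$ over the half-planes $\tau>0$ and $\tau<0$, with constants $4^{n-1}(n-1)!\,C_\alpha$ and $4^{n-1}(n-1)!\,\widetilde{C_\alpha}$ respectively. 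These appear unchanged as the first two lines of the asserted formula.

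Next I would take the displayed expression for $\langle\Phi_{12},f\rangle$ obtained just before the theorem statement, namely the double sum over odd $l$ from $0$ to $n-2$ and over $j$ from $1$ to $l+1$, which after inserting the closed forms $c_j^\pm=\lim_{r\to1^-}w_j^\pm(r)$ from \eqref{c-j} and the formula $a_{kl}+b_{kl}=(-1)^k\sum_{j=1}^{l+1}2^{-(n-l-j-1)}\binom{n-j-1}{l-j+1}\binom{j+k-1}{k}$ (from Lemma 4.4 of \cite{[G-S2]}), together with Lemma 2.8 evaluated at $\epsilon\to0^+$, produces precisely the two sums against $\langle\delta,\cdot\rangle$ and $\langle(vp)(1/\lambda),\cdot\rangle$ appearing in lines three through six of the theorem. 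The coefficient $2^{2l-n+j+3}$ comes from combining the $2^{l+2}$ from \eqref{Phi12} with the $2^{-(n-l-j-1)}$ above.

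The remaining ingredient is $\Phi_2$, the finite sum $\sum_{-p<k<q}$ from \eqref{Phi2}. Here I would expand each $S_{\lambda,k}=e^{-i\lambda t}\otimes F_{\lambda,k}$ using the explicit formulas \eqref{Flambdak>0}–\eqref{Flambdak<0} for $F_{\lambda,k}$; since $-p<k<q$ both the $k\ge0$ and $k<0$ branches contribute Laguerre polynomials $L_{k-q+n-1}^0$ or $L_{-k-p+n-1}^0$ of degree at most $n-2$, so differentiating $(n-1)$ times and integrating against $\tau\mapsto (2/|\lambda|)e^{-\tau/2}Nf(\pm(2/|\lambda|)\tau,t)$ leaves only boundary terms at $\tau=0$, i.e.\ a finite combination of $\partial^l_\tau Nf(0,t)$ with $0\le l\le n-2$, multiplied by powers $|\lambda|^{n-l-2}$. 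Applying Lemma 2.8 again turns the $\lambda$-integral into the distributions $\tfrac{\pi}{2}\delta\pm i(vp)(1/\lambda)$ acting on $\partial^{n-2}_{\lambda^{n-l-2}\tau^l}Nf(0,\cdot)$, and re-indexing the combinatorial coefficients (the binomials $\binom{j}{l}$ and $\binom{k-l+1}{n-j-2}$ come from expanding the Laguerre polynomials via their explicit coefficient formula and differentiating) gives the final block with the stated $c_{kl}= \sum_{1\le j\le n-2,\,j\ge n-k-2} 2^{2l-n-j}(-1)^{n-j}\binom{j}{l}\binom{k-l+1}{n-j-2}$. Adding $\Phi_{11}+\Phi_{12}+\Phi_2$ yields the formula.

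The main obstacle will be the last step: correctly tracking all the normalizing constants and combinatorial coefficients through the reindexings in $\Phi_2$ — in particular matching the shifts $k\mapsto k'+q$, $k\mapsto k'-p$ used earlier with the Laguerre-polynomial degrees, keeping the $1/(n-2k\pm\alpha-2)$ denominators aligned with the right summation variable, and verifying the powers of $2$ and of $i$. This is entirely mechanical but error-prone; everything analytic (convergence, interchange of limits, integrability of the kernels, the $\mathcal{X}'$- and $\mathcal{Y}'$-convergence of $\Psi_{r,\alpha}$) has already been established in Propositions 2.2, 2.6, 2.9, Lemmas 2.3, 2.4, 2.8 and Corollary 2.7, so no further estimates are needed.
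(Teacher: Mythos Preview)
Your proposal is correct and follows essentially the same route as the paper: the theorem is obtained by assembling the already-computed pieces $\Phi_{11}$ (from Corollary 2.7), $\Phi_{12}$ (from the displayed formula just above), and $\Phi_2$ (by applying Lemma 2.8 once more after expanding the low-degree Laguerre polynomials for $-p<k<q$ so that only boundary terms at $\tau=0$ survive). The paper itself does not isolate a separate proof but simply states ``All we need to do now is to use again lemma 2.8 to get an expression for $\Phi_2$. Thus, we have proved the following'', so your outline is in fact more explicit than the paper's.
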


\section{A fundamental solution for $L$}

\par Like in the classical case, we have that the distribution $\Phi$ defined in (\ref{fundsol2}) is a well defined tempered distribution and it is a relative fundamental solution for the operator $L$. The proof is identical to the one of theorem 2.1.

\par We will compute the fundamental solution $\Phi$ by means of the Radon transform and the fundamental solution of the operator $L$ in the classical case $\HH_{2n}$. 

\par Let $F\in\mathcal{S}(\RR^{3})$. We assign to $F$ a function $\mathcal{R}F: \RR\times S^{2} \to \RR$ defined by $$\mathcal{R}F(t,\xi)=\int\limits_{\RR^{2}} F(t\xi+u_{1}e_{1}+u_{2}e_{2}) du_{1} du_{2},$$ where $\{\xi,e_{1},e_{2}\}$ is an orthonormal basis of $\RR^{3}$. It is easy to see that this definition does not depend on the choice of the basis. In order to recover $F$ from $\mathcal{R}F$, we consider the space $\mathcal{S}(\RR\times S^{2})$ of the continuous functions $G:\RR\times S^{2}\to \RR$ that are infinitely differentiable in $t$ and satisfy for every $m,k\in\NN_{0}$ that $$\sup\limits_{t\in\RR,\xi\in S^{2}} \left| t^{m} {{\partial^{k}}\over{\partial t^{k}}}G(t,\xi) \right|<\infty.$$ Now for $G\in\mathcal{S}(\RR\times S^{2})$ we define a function $\mathcal{R}^{\ast}G:\RR^{3}\to \RR$ by $$\mathcal{R}^{\ast}G(z)=\int\limits_{S^{2}} G(<z,\xi>,\xi) d\xi.$$ Both assignments are well defined, and moreover $\mathcal{R}:\mathcal{S}(\RR^{3})\to\mathcal{S}(\RR\times S^{2})$ is the \textit{Radon transform}, $\mathcal{R}^{\ast}:\mathcal{S}(\RR\times S^{2})\to\mathcal{S}(\RR^{3})$ is the \textit{dual Radon transform} and they satisfy for every $F\in\mathcal{S}(\RR^{3})$ \begin{equation}\label{inversionradon}-2\pi F = \triangle \mathcal{R}^{\ast} \mathcal{R} F,\end{equation} where $\triangle={{\partial^{2}}\over{\partial z_{1}^{2}}}+{{\partial^{2}}\over{\partial z_{2}^{2}}}+{{\partial^{2}}\over{\partial z_{3}^{2}}}$ is the $\RR^{3}-$Laplacian.

\par Now let us consider the function $\phi$ defined for a fixed $\tau$, $\tau\neq 0$ by $$\phi(\tau,z)={{16n}\over {\pi}} {{4^{2n}(2n-1)!c_{0}}\over{(\tau^{2}+16|z|^{2})^{n+1}}},$$ 
where $c_{0}=-\int\limits_{0}^{1} \sigma^{2n-1}(1+\sigma^{2})^{2n} d\sigma$. The function $\phi(\tau,.)$ is not a Schwartz function on $\RR^{3}$, but we have that $(1+\triangle)^{k}\phi(\tau,.)\in L^{1}(\RR^{3})$, hence $(1+|\xi|^{2})^{k}\widehat{\phi(\tau,.)}(\xi)\in L^{\infty}(\RR^{3})$. With these properties the inversion formula for the Radon transform (\ref{inversionradon}) still holds. The proof follows straightforward from theorem 5.4 of \cite{[S-Sh]}. Let us now compute the Radon transform of the function $\phi$.
\begin{align*}
& \mathcal{R}\phi(\tau,t,\xi)=\int\limits_{\RR^{2}} {{16n}\over {\pi}} {{4^{2n}(2n-1)!c_{0}}\over{(\tau^{2}+16(t^{2}+u_{1}^{2}+u_{2}^{2}))^{n+1}}} du_{1} du_{2} = \\
& \qquad =  {{16n}\over {\pi}} {{4^{2n}(2n-1)!c_{0}}\over{16^{n+1}}}  \int\limits_{\RR^{2}} {1\over{\left({{\tau^{2}}\over 16}+t^{2}+(u_{1}^{2}+u_{2}^{2})\right)^{n+1}}}  du_{1} du_{2} = \\
& \qquad ={{16n}\over {\pi}} {{4^{2n}(2n-1)!c_{0}}\over{16^{n+1}}}  \int\limits_{-\pi\over 2}^{{3\over 2}\pi} \int\limits_{0}^{\infty}  {{\rho}\over{\left({{\tau^{2}}\over 16}+t^{2}+\rho^{2}\right)^{n+1}}}  d\rho d\theta = \\
& \qquad = {{{4^{2n}(2n-1)!c_{0}} }\over (\tau^{2}+16|z|^{2})^{n}},
\end{align*}
where $z=t\xi$. Let $\varphi(\tau,z)={{{4^{2n}(2n-1)!c_{0}} }\over (\tau^{2}+16|z|^{2})^{n}}$.  Now from the expression of the fundamental solution of $L$ in the classical case (see for example 4.3 of \cite{[G-S2]}) we know that
 $$\varphi(\tau,t\xi)=\sum\limits_{k\ge 0} { {(-1)}\over{2k+2n}}  \int\limits_{-\infty}^{\infty} e^{i\lambda t}  L_{k}^{2n-1}\left( {{\lambda}\over 2}|\tau| \right) e^{-{{\lambda}\over 4}|\tau|} |\lambda|^{2n-1} d\lambda.$$

\hfill

\par The first step to compute $\Phi$ is to change to polar coordinates in $\RR^{3}$ the expression given in (\ref{fundsol2}):
\begin{align*}
&<\Phi,f> =  \sum\limits_{k\in\ZZ} \int\limits_{\RR^{3}} {1\over{-|\lambda|(2k+2(p-q))}} <\varphi_{w,k},f> |w|^{2n} dw = \\
& \qquad = \sum\limits_{k\in\ZZ} \int\limits_{S^{2}} \int\limits_{0}^{\infty} {1\over{-|\lambda|(2k+2(p-q))}} <\varphi_{\lambda\xi,k},f> |\lambda|^{2n+2} d\lambda d\xi.
\end{align*}

\par From the absolute convergence of (\ref{fundsol2}) we can interchange the summation symbol with the integral over $S^{2}$. Because of $\triangle e^{i\lambda<\xi,z>}=-|\lambda|^{2}e^{i\lambda<\xi,z>}$, we have that
\begin{align*}
& <\Phi,f> = \int\limits_{S^{2}} \sum\limits_{k\in\ZZ} {(-1)\over{(2k+2(p-q))}} \int\limits_{0}^{\infty} \int\limits_{N(p,q,\HH)} e^{i\lambda<\xi,z>} \theta_{\lambda,k}(\alpha)f(\alpha,z) d\alpha dz |\lambda|^{2n+1} d\lambda d\xi = \\
&\qquad = \int\limits_{S^{2}} \sum\limits_{k\in\ZZ} {1\over{(2k+2(p-q))}} \int\limits_{0}^{\infty} \int\limits_{N(p,q,\HH)} \triangle e^{i\lambda<\xi,z>} \theta_{\lambda,k}(\alpha)f(\alpha,z) d\alpha dz |\lambda|^{2n-1} d\lambda d\xi = \\
& \qquad = \int\limits_{S^{2}} \sum\limits_{k\in\ZZ} {1\over{(2k+2(p-q))}} \int\limits_{0}^{\infty} <\varphi_{\lambda\xi,k}, \triangle f> |\lambda|^{2n-1} d\lambda d\xi.\\
\end{align*}
Next we break the summation indexes according to $k\ge 2q$, $k\le -2p$ and $-2p<k<2q$ to get the splitting $<\Phi,f>=<\Phi_{1},f>+<\Phi_{2},f>$, and as in section we change summation index to get the series starting from $k=0$. From the explicit definition of $\varphi_{\lambda\xi,k}$ we can write \begin{align*}
&<\Phi_{1},f>=\int\limits_{S^{2}} \sum\limits_{k\ge 0} {1\over{2k+2n}} \int\limits_{0}^{\infty} \int\limits_{\RR^{3}} e^{i\lambda<\xi,z>} \times \\
& \qquad \qquad \times <T_{\lambda,k+2q}-T_{\lambda,-k-2p},N\triangle f(.,z)> dz |\lambda|^{2n-1} d\lambda d\xi,
\end{align*}
where $T_{\lambda,k}$ is defined by equations (\ref{Flambdak>0}) and (\ref{Flambdak<0}). By performing similar computations than in section 2 and introducing the function $$G_{f}(\tau,z)=Nf(\tau,z)-\sum\limits_{j=0}^{2n-2} {{\partial^{j}Nf}\over{\partial\tau^{j}}}(0,z){{\tau^{j}}\over{j!}}$$ we get the splitting 
$<\Phi_{1},f>=<\Phi_{11},f>+<\Phi_{12},f>$, where
\begin{align} \label{defphi11}
<\Phi_{11},f> &= \int\limits_{S^{2}} \sum\limits_{k\ge 0} {(-1)\over{2k+2n}}\int\limits_{0}^{\infty} \int\limits_{\RR^{3}} \int\limits_{-\infty}^{\infty} e^{i\lambda<\xi,z>} \times \\ \nonumber
& \qquad \times  \mbox{ sgn}(\tau) L_{k}^{2n-1}\left({2\over{\lambda}}|\tau|\right)e^{-{{\lambda}\over 4}|\tau|} \triangle G_{f}(\tau,z) d\tau dz |\lambda|^{2n-1} d\lambda d\xi,
\end{align}
\begin{align} \label{defphi12}
<\Phi_{12},f> &= 2\int\limits_{S^{2}} \sum\limits_{k\ge 0} {1\over{2k+2n}} \int\limits_{0}^{\infty} \int\limits_{\RR^{3}} e^{i\lambda<\xi,z>} \times \\ \nonumber
& \qquad \times \sum\limits_{\stackrel{l=0}{l \mbox{ \tiny{odd}}}}^{2n-2} \left({2\over{\lambda}}\right)^{l+1} (a_{kl}+b_{kl}) <\delta^{(l)},\triangle Nf(.,z)> dz |\lambda|^{2n-1} d\xi, 
\end{align}
and $a_{kl}$, $b_{kl}$ are the same constant defined in (\ref{akl}) and (\ref{bkl}), respectively. Now let us recall the fact that 
$$\int\limits_{S^{2}} \int\limits_{0}^{\infty} e^{i\lambda<\xi,z>}F(|\lambda|)d\lambda d\xi = {1\over 2} \int\limits_{S^{2}}\int\limits_{-\infty}^{\infty} e^{i\lambda<\xi,z>} F(|\lambda|)d\lambda d\xi,$$ and we apply the dual Radon transform to (\ref{defphi11}).

\par Observe now that $$\int\limits_{-\infty}^{\infty} \int\limits_{\RR^{3}} {{\mbox{sgn}(\tau)G_{f}(\tau,z)}\over{(1+16|z|^{2})^{n+1}}} dz d\tau$$ converges, which we can see by changing to polar variables in $\RR^{3}$ and arguing like in lemma 2.2 of \cite{[G-S2]}.

\par We finally get that 
\begin{align*}
&<\Phi_{11},f> =  {1\over 2} <-2\pi {{16n}\over {\pi}} {{4^{2n}(2n-1)!c_{0}}\over{(\tau^{2}+16|z|^{2})^{n+1}}} , \mbox{ sgn}(\tau)  G_{f}(\tau,z)>= \\
&\qquad = -4^{2n+2} n (2n-1)! c_{0} <{1\over {(\tau^{2}+16|z|^{2})^{n+1}}},\mbox{ sgn}(\tau)G_{f}(\tau,z)>.\end{align*}

\par We have thus proven that the expression defining $\Phi_{11}$ is finite. Then the expression defining $\Phi_{12}$ is also finite, and by Abel's lemma we can write
\begin{align*}
<\Phi_{12},f>&= 2 \lim\limits_{r\to 1^{-}} \lim\limits_{\epsilon\to 0^{+}} \sum\limits_{k\ge 0} \sum\limits_{\stackrel{l=0}{l \mbox{ \tiny{odd}}}}^{2n-2} 2^{l+1} (a_{kl}+b_{kl}) {{r^{2k+2n}}\over{2k+2n}} d_{\epsilon,l,f},
\end{align*}
where
\begin{equation}
\label{delf}
d_{\epsilon,l,f}=\int\limits_{S^{2}} \int\limits_{0}^{\infty}\int\limits_{\RR^{3}} e^{-\epsilon\lambda} e^{i\lambda <\xi,z>} |\lambda|^{2n-l-2} <\delta^{(l)}, \triangle Nf(.,z)> dz d\lambda d\xi.
\end{equation}
We need to compute $\lim\limits_{\epsilon\to 0^{+}} d_{\epsilon,l,f}$. Observing that $\triangle e^{i\lambda <\xi,z>}=-|\lambda|^{2} e^{i\lambda<\xi,z>}$, we have that
\begin{align*}
d_{\epsilon,l,f} &= (-1)^{l+1} \int\limits_{S^{2}}\int\limits_{0}^{\infty}\int\limits_{\RR^{3}} e^{-\epsilon\lambda}e^{i\lambda<\xi,z>} |\lambda|^{2n-l} {{\partial^{l}}\over{\partial\tau^{l}}}Nf(0,z) dz d\lambda d\xi = \\
&= (-1)^{l+1} \int\limits_{\RR^{3}} \int\limits_{\RR^{3}} e^{-\epsilon|x|}|x|^{2n-l-2}e^{i<x,z>} { {\partial^{l}}\over{\partial\tau^{l}}}Nf(0,z) dz dx,
\end{align*}
where we have changed to cartesian coordinates in $\RR^{3}$. To solve this integral let us observe that $$(-1)^{2n-l-2} e^{-\epsilon|x|} |x|^{2n-l-2} = \widehat{{{\partial^{2n-l-2}}\over{\partial\epsilon^{2n-l-2}}}} P_{\epsilon}(x),$$
where $P_{\epsilon}(x)$ is the Poisson kernel and $\hat{ }$ denotes the Fourier transform. Let us write $$d_{\epsilon,l,f}=(-1)^{l}\int\limits_{\RR^{3}} \widehat{{{\partial^{2n-l-2}}\over{\partial\epsilon^{2n-l-2}}}} P_{\epsilon}(x) \left( {{\partial^{l}}\over{\partial\tau^{l}}} Nf(0,.) \right)^{\hat{ }}(x) dx = (-1)^{l} {{\partial^{2n-l-2}}\over{\partial\epsilon^{2n-l-2}}} (P_{\epsilon}\ast h)(0).$$
Taking limit as $\epsilon\to 0^{+}$ we obtain $$\lim\limits_{\epsilon\to 0^{+}} = (-1) (-\triangle)^{{2n-l-2}\over 2} {{\partial^{l}}\over{\partial\tau^{l}}}Nf(0,0),$$ where $(-\triangle)^{{2n-l-2}\over 2}$ is a fractionary exponential of the Laplacian (see for example \cite{[S-Sh]}), which is the operator defined for $g\in\mathcal{S}(\RR^{3})$ by $$(-\triangle)^{{2n-l-2}\over 2} g(x)=\int\limits_{\RR^{3}} |\omega|^{2n-l-2} \widehat{g}(\omega) e^{i<\omega,z>} d\omega.$$
This computation together with proposition 4.8 of \cite{[G-S2]} lets us write
$$<\Phi_{12},f>= \sum\limits_{\stackrel{l=0}{l \mbox{ \tiny{odd}}}}^{2n-2} \sum\limits_{j=1}^{l+1} {1\over{2^{2n-2l-j-3}}} c_{j} \binom{2n-j-1}{l-j-1} (-1) (-\triangle)^{{2n-l-2}\over 2} {{\partial^{l}}\over{\partial\tau^{l}}} Nf(0,0),$$
where each $c_{j}$ is the constant defined in Remark 4.7 of \cite{[G-S2]}.

\par After performing the usual computations for $\Phi_{2}$ we have proved the main theorem of this section:

\begin{thm} Let $c_{0}$ and $c_{j}$ be the constants defined above. Then
\begin{align*}
&<\Phi,f>= -4^{2n+2} n (2n-1)! c_{0} <{1\over {(\tau^{2}+16|z|^{2})^{n+1}}},\mbox{ sgn}(\tau)G_{f}(\tau,z)> + \\
&\qquad +\sum\limits_{\stackrel{l=0}{l\mbox{odd}}}^{2n-2}   \sum\limits_{j=1}^{l+1} {1\over{2^{2n-2l-j-3}}} c_{j} \binom{2n-j-1}{l-j-1} (-1) (-\triangle)^{{2n-l-2}\over 2} {{\partial^{l}}\over{\partial\tau^{l}}}Nf(0,0) + \\
& \qquad + \sum\limits_{-2q<k<0} {1\over{2k+2(p-q)}} \left[ (-1)^{k+1}\sum\limits_{r=0}^{k+2p-1}\binom{k+2p-1}{r}2^{-k-2p+2r+2} \times \right. \\
&\qquad \qquad \left. \times (-1) (-\triangle)^{{2n-r-2}\over 2} {{\partial^{r}}\over{\partial\tau^{r}}}Nf(0,0) + \right. \\
& \qquad \left. + \sum\limits_{l=0}^{2n-2-k-2p} (-1)^{-l-k} \binom{-k-2p+2n-1}{2n-2-l-k-2p} \sum\limits_{r=0}^{l+k-2p} \binom{l+k-2p}{r} \times \right. \\
&\qquad \qquad \left. \times (-1) 2^{-l-k+2p+2r+1}  (-\triangle)^{{2n-r-2}\over 2} {{\partial^{r}}\over{\partial\tau^{r}}}Nf(0,0) \right] + \\
&\qquad + \sum\limits_{0\le k<2q} {1\over{2k+2(p-q)}} \left[  (-1)^{k+1}\sum\limits_{r=0}^{-k+2q-1}\binom{-k+2q-1}{r} \times \right. \\
&\qquad \qquad \left. \times 2^{k-2q+2r+1} (-1)^{r} (-\triangle)^{{2n-r-2}\over 2} {{\partial^{r}}\over{\partial\tau^{r}}}Nf(0,0) + \right. \\
& \qquad \left. + \sum\limits_{l=0}^{2n-2+k-2q} (-1)^{-l+k-2q} \binom{k-2q+2n-1}{2n-2-l+k-2q} \sum\limits_{r=0}^{l-k+2q} \binom{l-k+2q}{r}  \times \right. \\
&\qquad \qquad \left. \times 2^{-l+k-2q+2r+1} (-1)^{r} (-\triangle)^{{2n-r-2}\over 2} {{\partial^{r}}\over{\partial\tau^{r}}}Nf(0,0)\right].\\
\end{align*}
\end{thm}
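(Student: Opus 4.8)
The plan is to carry out, step by step, the reduction-to-the-classical-case strategy already set up in the paragraphs preceding the statement, and then assemble the three pieces $\Phi_{11}$, $\Phi_{12}$, $\Phi_{2}$. Starting from the defining expression (\ref{fundsol2}), I would first pass to polar coordinates $w=\lambda\xi$ with $\xi\in S^{2}$, use the absolute convergence of the defining series (established exactly as in the analogue of Theorem 2.1) to interchange the sum over $k$ with the integral over $S^{2}$, and then transfer two powers of $|\lambda|$ onto the test function via $\triangle e^{i\lambda\langle\xi,z\rangle}=-|\lambda|^{2}e^{i\lambda\langle\xi,z\rangle}$. Splitting the sum according to $k\ge 2q$, $k\le -2p$ and $-2p<k<2q$ gives $\Phi=\Phi_{1}+\Phi_{2}$ with $\Phi_{2}$ a finite sum; reindexing the two infinite tails so they start at $k=0$ and introducing the Taylor remainder $G_{f}(\tau,z)=Nf(\tau,z)-\sum_{j=0}^{2n-2}\partial_{\tau}^{j}Nf(0,z)\,\tau^{j}/j!$ splits $\Phi_{1}=\Phi_{11}+\Phi_{12}$ as in (\ref{defphi11})–(\ref{defphi12}).

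For $\Phi_{11}$ the key point is to recognize the Laguerre series inside (\ref{defphi11}) as a symmetrized, $\triangle$-applied version of the classical fundamental solution $\varphi(\tau,t\xi)=4^{2n}(2n-1)!\,c_{0}\,(\tau^{2}+16|z|^{2})^{-n}$ of $L$ on $\HH_{2n}$, and then to invert the Radon transform. Since $\mathcal{R}\phi=\varphi$ for the explicit $\phi$ computed above, the Radon inversion formula (\ref{inversionradon})—valid for the non-Schwartz but $(1+\triangle)^{k}$-integrable function $\phi(\tau,\cdot)$ by Theorem 5.4 of \cite{[S-Sh]}—turns the $\xi$-integral of $e^{i\lambda\langle\xi,z\rangle}$ paired with $\triangle G_{f}$ into $\langle -2\pi\phi(\tau,\cdot),\,\mathrm{sgn}(\tau)G_{f}(\tau,\cdot)\rangle$, up to the factor $\tfrac12$ coming from extending the $\lambda$-integral to the whole line. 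Convergence of the resulting double integral in $(\tau,z)$ is checked by passing to polar coordinates in $\RR^{3}$ and arguing as in Lemma 2.2 of \cite{[G-S2]}. This produces the first line of the claimed formula, with constant $-4^{2n+2}n(2n-1)!\,c_{0}$.

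For $\Phi_{12}$, finiteness is automatic once $\Phi$ and $\Phi_{11}$ are known to be finite, so Abel's lemma applies and reduces matters to computing $\lim_{\epsilon\to0^{+}}d_{\epsilon,l,f}$ in (\ref{delf}). Writing $(-1)^{2n-l-2}e^{-\epsilon|x|}|x|^{2n-l-2}$ as the Fourier transform of $\partial_{\epsilon}^{2n-l-2}P_{\epsilon}$, with $P_{\epsilon}$ the Poisson kernel on $\RR^{3}$, and differentiating the resulting convolution and letting $\epsilon\to0^{+}$, this limit becomes $-(-\triangle)^{(2n-l-2)/2}\partial_{\tau}^{l}Nf(0,0)$; combining this with the closed form for $a_{kl}+b_{kl}$ and with the evaluation at $r\to1^{-}$ of the series $\sum_{k}(-1)^{k}\binom{j+k-1}{k}\,r^{2k+2n}/(2k+2n)$ (Proposition 4.8 and Remark 4.7 of \cite{[G-S2]}, giving the constants $c_{j}$) yields the second block of the formula. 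Finally $\Phi_{2}$ is a finite sum over $-2p<k<2q$; repeating the $d_{\epsilon,l,f}$-type computation term by term—splitting at $k\ge 0$ and $k<0$ and expanding the Laguerre polynomials $L_{k-q+2n-1}^{0}$ and $L_{-k-p+2n-1}^{0}$ binomially—produces the last two blocks, and assembling the three contributions completes the proof.

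The main obstacle I anticipate is not a single conceptual difficulty but the \emph{bookkeeping}: justifying the Radon inversion for the non-Schwartz kernel $\phi(\tau,\cdot)$, keeping the reindexing of the $k$-sums mutually consistent across $\Phi_{11}$, $\Phi_{12}$ and $\Phi_{2}$, and correctly propagating the combinatorial constants ($c_{0}$, the $c_{j}$, the $a_{kl}+b_{kl}$, and the nested binomial coefficients in the finite part) through the Fourier- and Poisson-kernel manipulations. Apart from that, the argument is a faithful transcription of the classical computation in \cite{[G-S2]} with $n,p,q$ replaced throughout by $2n,2p,2q$.
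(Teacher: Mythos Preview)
Your proposal is correct and follows essentially the same route as the paper: polar coordinates plus the $\triangle e^{i\lambda\langle\xi,z\rangle}=-|\lambda|^{2}e^{i\lambda\langle\xi,z\rangle}$ trick, the three-way split $\Phi=\Phi_{11}+\Phi_{12}+\Phi_{2}$, Radon inversion against the explicit $\phi$ with $\mathcal{R}\phi=\varphi$ for $\Phi_{11}$, the Poisson-kernel computation of $\lim_{\epsilon\to0^{+}}d_{\epsilon,l,f}$ for $\Phi_{12}$, and the finite expansion for $\Phi_{2}$. The only slip is notational: in the $\Phi_{2}$ step the Laguerre indices should read $L_{k-2q+2n-1}^{0}$ and $L_{-k-2p+2n-1}^{0}$, consistent with your own closing remark about replacing $p,q$ by $2p,2q$.
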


\hfill

\subsection*{Remark} Let $N$ be a group of Heisenberg type and let $\eta$ be its Lie algebra. So $\eta=V\oplus\mathfrak{z}$, with $\dim V = 2m$ and $\dim \mathfrak{z}=k$. Let $U(V)$ be the unitary group acting on $V$. Then it is known (\cite{[R]}) that $(N\ltimes U(V), U(V))$ is a Gelfand pair, and also in \cite{[R]} were computed the spherical functions. We fix an orthonormal basis of $V$, $\{X_{1},\dots,X_{2m}\}$, and consider the operator $$L=\sum\limits_{j=1}^{2m} X_{j}^{2}.$$ With the same arguments as above, using the Radon transform in $\RR^{k}$ and the fundamental solution of $L$ in the classical Heisenberg group $2m+1$ dimensional, we can recover the fundamental solution of $L$ (see \cite{[K]}, \cite{[R]}).











\end{document}